\newtheorem{theorem}{Theorem}[section]
\newtheorem{lemma}[theorem]{Lemma}
\newtheorem{remark}[theorem]{Remark}
\begin{document}

\begin{frontmatter}

\title{Global existence and blow-up of solutions for a general class of doubly dispersive nonlocal nonlinear wave equations}

\author{C. Babaoglu$^{1}$}
    \ead{ceni@itu.edu.tr}
\author{H. A. Erbay$^2$\corref{cor1}}
    \ead{husnuata.erbay@ozyegin.edu.tr}
\author{A. Erkip$^3$}
    \ead{albert@sabanciuniv.edu}
\cortext[cor1]{Corresponding author. Tel: +90 216 564 9489 Fax: +90 216 564 9057}

\address{$^1$ Department of Mathematics, Faculty of Science and Letters, Istanbul Technical University,  Maslak 34469, Istanbul,
Turkey}

\address{$^2$ Faculty of Arts and Sciences, Ozyegin University,  Cekmekoy 34794, Istanbul, Turkey}

\address{$^3$ Faculty of Engineering and Natural Sciences, Sabanci University,  Tuzla 34956,  Istanbul,    Turkey}

  \begin{abstract}
  This study deals with the analysis of the Cauchy problem of a general class of  nonlocal nonlinear equations   modeling the
  bi-directional propagation of dispersive waves in various contexts. The nonlocal nature of the problem is reflected by two
  different  elliptic pseudodifferential operators acting on linear and nonlinear functions of the dependent variable,
  respectively. The well-known doubly dispersive nonlinear wave equation that incorporates two types of dispersive effects
  originated from two different dispersion operators falls into the category studied here.  The class of nonlocal nonlinear
  wave equations also covers a variety of well-known wave equations such as  various forms of the Boussinesq equation.
  Local existence of solutions of the Cauchy problem with initial data in suitable Sobolev spaces is proven and the conditions
  for global existence and finite-time blow-up of solutions are established.
 \end{abstract}

 \begin{keyword}
 Nonlocal Cauchy problem \sep Double dispersion equation \sep Global existence \sep Blow-up \sep Boussinesq equation.
 \MSC  74H20 \sep 74J30 \sep 74B20
 \end{keyword}
 \end{frontmatter}

\setcounter{equation}{0}
\section{Introduction}
\noindent
In this study we mainly establish local existence, global existence and blow-up results for solutions of the Cauchy problem
\begin{eqnarray}
    && u_{tt}-Lu_{xx} =B(g(u))_{xx},\quad x\in \mathbb{R},\quad t>0, \label{non-equ}\\
    && u(x,0) =\varphi (x),\quad u_{t}(x,0)=\psi (x), \label{ini}
\end{eqnarray}
where $g$ is a sufficiently smooth nonlinear function, $L$ and $B$ are linear pseudodifferential operators defined by
\begin{displaymath}
    \mathcal{F}\left( Lv\right)(\xi)=l(\xi) \mathcal{F}(v)(\xi), \quad
    \mathcal{F}\left( Bv\right)(\xi)=b(\xi) \mathcal{F}(v)(\xi).
\end{displaymath}
Here $\mathcal{F}$ denotes the Fourier transform  with respect to variable $x$ and $l(\xi)$ and $b(\xi)$ are the symbols of $L$
and $B$, respectively. We assume that $L$ is an elliptic coercive operator of order $\rho$ with $\rho\ge 0$ while $B$ is an
elliptic positive operator of order $-r$ with $r\ge 0$. In terms of $l(\xi)$ and $b(\xi)$, this means that  there are positive
constants $c_{1}$, $c_{2}$ and $c_{3}$ so that for all $\xi \in  \mathbb{R}$,
\begin{eqnarray}
    && c_{1}^{2}(1+\xi^{2})^{\rho/2} \le l(\xi)\le c_{2}^{2}(1+\xi^{2})^{\rho/2}, \label{con-L}\\
    && 0 < b(\xi)\le c_{3}^{2}(1+\xi^{2})^{-r/2}. \label{con-B}
\end{eqnarray}
We emphasize the fact that, for non-polynomial function $l(\xi)$ or nonzero $b(\xi)$, the equation under investigation is of
nonlocal type. While the operator  $L$ is associated with the regularization resulting from the linear dispersion, the operator
$B$ is associated with the regularization resulting from the smoothing of the non-linear term.  In order to reflect more clearly
the double nature of the dispersive effects, it is convenient to rewrite (\ref{non-equ}) in a slightly different form. Taking
$B=(I+M)^{-1}$ where $I$ is the identity operator  and $M$ is an elliptic positive pseudodifferential operator of order $r>0$,
we rewrite (\ref{non-equ}) in the form
\begin{equation}
    u_{tt}-\tilde{L}u_{xx}+Mu_{tt}=(g(u))_{xx} \label{non-dde}
\end{equation}
with $\tilde{L}=(I+M)L$.  The second and third terms on the left-hand side of this equation represent two sources of dispersive
effects. The relation $\xi \mapsto \omega^{2} (\xi)= \xi^{2}\tilde{l}(\xi)/\left(1+m(\xi)\right)$ where $\tilde{l}(\xi)$ and
$m(\xi)$ are the symbols of $\tilde{L}$ and
$M$, respectively, will be referred to as the linear dispersion relation for (\ref{non-dde}). Since the symbols of $\tilde{L}$
and $M$ will appear in the numerator and denominator, respectively, of the linear dispersion relation,  we informally describe
the two dispersive effects as  "numerator-based" dispersive effect and a "denominator-based" dispersive effect to emphasize the
double nature of dispersion.

Even though our main interest lies primarily in understanding the role of pseudodifferential operators $L$, $B$, it is worth
noting that when $l(\xi)$ is a polynomial, $L$ becomes a differential operator and similarly that, when $b(\xi)$ equals the
reciprocal of a polynomial, $B$ becomes the Green function of the corresponding differential operator. In the polynomial case,
the equation under investigation (that is, (\ref{non-equ}) or (\ref{non-dde})) turns out to be some well-known nonlinear wave
equations for suitable choices of the operators  $\tilde{L}$ and $M$. For instance, we may note that, with the substitution
$\tilde{L}=1-\partial_{x}^{2}$ and $M=-\partial_{x}^{2}$, (\ref{non-dde}) reduces  to the so-called double dispersion equation
\begin{equation}
    u_{tt}-u_{xx}-u_{xxtt}+u_{xxxx}=(g(u))_{xx}.  \label{dde}
\end{equation}
 This equation is the most familiar example or special case of (\ref{non-equ}) and was derived in many different contexts (see,
 for instance, \cite{samsonov1,samsonov2} where it describes the propagation of longitudinal strain waves in a nonlinearly elastic
 rod). Thus, (\ref{non-dde}) might be referred to as a natural generalization of the double dispersion equation through the
 nonlocal operators $L$ and $B$.

 We also point out that  (\ref{non-dde}) reduces to the Boussinesq equation
\begin{equation}
    u_{tt}-u_{xx}+u_{xxxx} =(g(u))_{xx}  \label{boussinesq}
\end{equation}
with the substitution $\tilde{L}=1-\partial_{x}^{2}$ and $M=0$ (the zero operator), while it becomes the improved (or regularized)
Boussinesq equation
\begin{equation}
    u_{tt}-u_{xx}-u_{xxtt}=(g(u))_{xx}  \label{imp-boussinesq}
\end{equation}
with the substitution $\tilde{L}=I$ and $M=-\partial_{x}^{2}$ \cite{boussinesq1,boussinesq2}. Also, assuming $L=0$ and considering the operator $B$ as a convolution
\begin{displaymath}
    (Bv)(x)=(\beta\ast v)(x)=\int \beta(x-y)v(y) dy
\end{displaymath}
with the kernel $\beta(x)=\mathcal{F}^{-1}\left(b(\xi)\right)$ where $\mathcal{F}^{-1}$ denotes the inverse Fourier transform, we observe that (\ref{non-equ}) reduces to
\begin{equation}
    u_{tt}=\left( \int \beta(x-y)g(u(y,t))dy\right)_{xx}.  \label{nne}
\end{equation}
This equation was derived in \cite{duruk1} to model the propagation of strain waves in a one-dimensional, homogeneous, nonlinearly
and nonlocally elastic infinite medium (see \cite{duruk2} and \cite{erbay} for its coupled form and two-dimensional form,
respectively). Our inspiration for the present study comes essentially from (\ref{nne}) modelling an integral-type nonlocality of
elastic materials. In the present study we add to (\ref{nne})  the other type of nonlocality, originating from the inclusion of linear higher order gradients,  and focus on how the qualitative results obtained for (\ref{nne}) in \cite{duruk1} carry over to (\ref{non-equ}).

There is quite extensive literature on the well-posedness of the Cauchy problem for the Boussinesq equation (\ref{boussinesq})
(see e.g., \cite{bona,tsutsumi,liu1,xue}), for the improved Boussinesq equation (\ref{imp-boussinesq}) and its higher order
generalizations (see e.g., \cite{turitsyn,liu2,chen-wang,constantin,wang-mu,duruk3}) and for the double dispersion equation
(\ref{dde}) (see e.g., \cite{wang-chen}). In \cite{duruk1} consideration was given to the well-posedness of the Cauchy problem
for the nonlocal equation (\ref{nne}). The question that naturally arises is under which conditions the Cauchy problem
(\ref{non-equ})-(\ref{ini}) is well-posed and this is the subject of the present study.

The paper is organized as follows: To simplify the presentation, through Sections 2 to 5,  the special case where $B$ is the
identity operator will be treated and the modifications that would be needed for the general case will be given in Section 6.
That is, in Sections 2-5 the Cauchy problem for the equation
\begin{equation}
    u_{tt}-Lu_{xx} =(g(u))_{xx} \label{non-boussinesq}
\end{equation}
is considered only; while the Cauchy problem (\ref{non-equ})-(\ref{ini}) is considered in Section 6.   In Section 2, the
required a priori estimates are established for the linearized version of the Cauchy problem. In Section 3, the local existence
and uniqueness for the nonlinear Cauchy problem is proven using the contraction mapping principle. The main theorems stating the
global existence and uniqueness of the solution are demonstrated in Section 4. The blow-up criteria is presented in Section 5.
Finally, in Section 6, the global existence and blow-up results obtained through Sections 2 to 5 are extended to the Cauchy
problem (\ref{non-equ})-(\ref{ini}).

Throughout the study, we use the following notational conventions: $H^s=H^s(\mathbb{R})$ will denote the $L^2$ Sobolev space
on $\mathbb{R}$. For the $H^s$ norm, we use the Fourier transform representation
$\left\Vert u \right\Vert _s^2=\int(1+\xi^2)^s |\widehat u(\xi)|^2 d\xi$ where the symbol $\widehat ~$ represents the Fourier
transform. We use $ \left\Vert u \right\Vert_\infty$, $\left\Vert  u \right\Vert$ and  $\langle u,v \rangle$ to denote the
$L^\infty$ and $L^2$ norms and the inner product in $L^2$, respectively.

\setcounter{equation}{0}
\section{Cauchy Problem for the Linearized Equation}
\noindent
In this section we focus on the linear version of  (\ref{non-boussinesq}) and prove the following existence and uniqueness result.
\begin{theorem}\label{theo2.1}
    Let $~T>0$, $~s\in \mathbb{R}$, $~\varphi \in H^{s}$, $~\psi \in H^{s-1-{\rho\over 2}}$ and
    $~h\in L^{1}( \left[0,T\right],H^{s+1-{\rho\over 2}})$. Then the Cauchy problem
    \begin{eqnarray}
    && u_{tt}-Lu_{xx} =(h(x,t))_{xx},\quad x\in \mathbb{R},\quad t>0, \label{lin-equ}\\
    && u(x,0) =\varphi (x),\quad u_{t}(x,0)=\psi (x), \label{lin-ini}
    \end{eqnarray}
    has a unique solution $~u\in C( \left[0,T\right],H^{s})\cap C^{1}( \left[0,T\right],H^{s-1-{\rho\over 2}})$ satisfying the following
    estimate
    \begin{equation}
     \left\Vert u(t) \right\Vert_{s}+\left\Vert u_{t}(t)\right\Vert_{s-1-{\rho\over 2}}
         \leq  (A_{1}+A_{2}T) \left(
            \left\Vert \varphi \right\Vert_{s}+\left\Vert \psi\right\Vert_{s-1-{\rho\over 2}}
            + \int_{0}^{t}\left\Vert h(\tau) \right\Vert_{s+1-{\rho\over 2}}d\tau \right)  \label{lin-est}
    \end{equation}
    for $0\le t \le T$, with some positive constants $A_{1}$ and $A_{2}$.
\end{theorem}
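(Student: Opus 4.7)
The plan is to solve the linearized problem explicitly by the Fourier transform in $x$, since the operator $L$ is a Fourier multiplier. Setting $\omega(\xi)=|\xi|\sqrt{l(\xi)}$, which is well-defined and real by the coercivity (\ref{con-L}), equation (\ref{lin-equ}) becomes the ODE $\widehat{u}_{tt}+\omega^{2}\widehat{u}=-\xi^{2}\widehat{h}$ in the variable $t$, whose solution by variation of parameters is
\begin{equation*}
\widehat{u}(\xi,t)=\cos(\omega t)\widehat{\varphi}(\xi)+\frac{\sin(\omega t)}{\omega}\widehat{\psi}(\xi)-\int_{0}^{t}\frac{\sin(\omega(t-\tau))}{\omega}\,\xi^{2}\widehat{h}(\xi,\tau)\,d\tau.
\end{equation*}
I would take this formula as the definition of the candidate solution, verify the $H^s$/$H^{s-1-\rho/2}$ estimate directly in Fourier variables via Plancherel, and then deduce existence, continuity in $t$ (by dominated convergence), and uniqueness (by applying the estimate to the difference of two solutions).

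The core step is to establish the pointwise multiplier bounds that, once integrated against $|\widehat{\varphi}|^{2}$, $|\widehat{\psi}|^{2}$ and $|\widehat{h}|^{2}$, yield the right-hand side of (\ref{lin-est}). For the $\|u(t)\|_{s}$ estimate the $\varphi$-term is immediate since $|\cos(\omega t)|\le 1$. For the $\psi$-term and the Duhamel term I would use the two complementary bounds
\begin{equation*}
\Bigl|\frac{\sin(\omega t)}{\omega}\Bigr|\le t,\qquad \Bigl|\frac{\sin(\omega t)}{\omega}\Bigr|\le \frac{1}{\omega},
\end{equation*}
and split the frequency domain into $|\xi|\le 1$ and $|\xi|>1$. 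On the high-frequency part the bound $1/\omega\le (c_{1}|\xi|(1+\xi^{2})^{\rho/4})^{-1}$ combined with $\xi^{2}\ge (1+\xi^{2})/2$ gives $t$-independent constants; on the low-frequency part the bound $t$ gives the factor $T$, together with the trivial inequality $(1+\xi^{2})^{s}\le 2^{1+\rho/2}(1+\xi^{2})^{s-1-\rho/2}$ on $|\xi|\le 1$. The same split, but now with an extra $\xi^{2}$ attached, handles the $h$-term; note that the weight matches because $(1+\xi^{2})^{s}\xi^{4}/\omega^{2}\lesssim (1+\xi^{2})^{s+1-\rho/2}$ for $|\xi|>1$, which is exactly the reason the regularity level for $h$ is $s+1-\rho/2$.

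For the $\|u_{t}(t)\|_{s-1-\rho/2}$ estimate I would differentiate the representation formula in $t$ and note the gain of symmetry: $|\omega\sin(\omega t)|\le \omega$ pairs with $(1+\xi^{2})^{s-1-\rho/2}\omega^{2}\le c_{2}^{2}(1+\xi^{2})^{s}$ (by the upper bound in (\ref{con-L})), the $\cos(\omega t)$-term is trivial, and the Duhamel term reduces to the multiplier $\xi^{4}(1+\xi^{2})^{-2}\le 1$. These are all uniform in $t$ and contribute only to the $A_{1}$ constant.

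The main obstacle, and what dictates the affine-in-$T$ form of the estimate, is the behaviour of $\sin(\omega t)/\omega$ near $\xi=0$, where $\omega$ vanishes: the low-frequency regime cannot be controlled by $1/\omega$, which is why the bound by $t$ is essential there and why a clean bound independent of $T$ is impossible at this level of generality. Once the multiplier estimates are in place, continuity of $u$ and $u_{t}$ in $t$ follows from dominated convergence applied to the Fourier representation (the integrands are dominated by the same weight functions used in the norm estimates), and uniqueness follows because the estimate (\ref{lin-est}) applied to the difference of two solutions with $\varphi=\psi=0$ and $h=0$ forces that difference to vanish.
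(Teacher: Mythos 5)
Your proposal is correct and follows essentially the same route as the paper: Fourier transform in $x$, the explicit variation-of-parameters formula with $\omega(\xi)=|\xi|\sqrt{l(\xi)}$, and the low/high frequency split using $|\sin(\omega t)/\omega|\le t$ near $\xi=0$ and $\le 1/\omega$ for $|\xi|\ge 1$, which is exactly how the paper obtains the $(A_1+A_2T)$ structure. The only cosmetic difference is that the paper routes the Duhamel term through a semigroup estimate at regularity $s+2$ plus Minkowski's inequality, while you estimate the multiplier $\xi^{4}(1+\xi^{2})^{s}/\omega^{2}$ directly; the computations are identical in substance.
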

\begin{proof}
    To make calculations easier to follow, we introduce the notation $K=L^{1/2}$ and  $k(\xi) = \sqrt{l(\xi)}$. Then
    the symbol $k(\xi)$ of the operator $K$ satisfies
    \begin{equation}
        c_{1}(1+\xi^{2})^{\rho/4} \le k(\xi)\le c_{2}(1+\xi^{2})^{\rho/4}. \label{con-K}
    \end{equation}
    Applying the Fourier transform to (\ref{lin-equ})-(\ref{lin-ini}) gives the initial-value problem
    \begin{eqnarray*}
    && \widehat{u}_{tt}+\left(\xi k(\xi)\right)^{2}\widehat{u} =-\xi^{2}\widehat{h}(\xi,t),\\
    && \widehat{u}(\xi,0) =\widehat{\varphi}(\xi),\quad \widehat{u}_{t}(\xi,0)=\widehat{\psi}(\xi)
    \end{eqnarray*}
    for the corresponding non-homogeneous ordinary differential equation. The solution of the initial-value problem is
    \begin{equation}
        \widehat{u}(\xi,t)=\widehat{\varphi}(\xi)\cos(\xi k(\xi)t)+{\widehat{\psi(\xi)}\over {\xi k(\xi)}}\sin(\xi k(\xi)t)
            -\int_{0}^{t}{\xi \over {k(\xi)}}\sin(\xi k(\xi)(t-\tau))\widehat{h}(\xi,\tau)d\tau.  \label{ode-sol}
    \end{equation}
    We see that this solution is generated by the semigroup
    \begin{displaymath}
        {\cal S}(t)v={\cal F}^{-1}\left( {{\sin(\xi k(\xi)t)}\over {\xi k(\xi)}}\widehat{v}(\xi) \right),
    \end{displaymath}
    which allows us to rewrite (\ref{ode-sol}) in the form  (note that ${\cal S}(t)$ commutes with differentiation)
    \begin{equation}
        u(t)=\partial_{t}{\cal S}(t)\varphi+{\cal S}(t)\psi+\int_{0}^{t}\partial_{x}^{2}{\cal S}(t-\tau)h(\tau)d\tau.
        \label{ode-sol-r}
    \end{equation}
    Now, we will estimate the terms on the right-hand side of (\ref{ode-sol-r}) separately. The estimate for the first term is
    \begin{equation}
     \left\Vert \partial_{t}{\cal S}(t)v \right\Vert_{s}^{2}
     = \int_{\mathbb{R}}(1+\xi^2)^s \cos^{2}(\xi k(\xi)t)|\widehat v(\xi)|^2 d\xi
       \le \int_{\mathbb{R}}(1+\xi^2)^s |\widehat v(\xi)|^2 d\xi =\left\Vert v \right\Vert_{s}^{2}. \label{first-term}
    \end{equation}
    For the second term, we have
    \begin{eqnarray*}
     && \!\!\!\!\!\!\!\!\!\!\!\!\!
     \left\Vert {\cal S}(t)v \right\Vert_{s}^{2}
     = \int_{\mathbb{R}}(1+\xi^2)^s \left( {{\sin(\xi k(\xi)t)}\over {\xi k(\xi)}}\right)^{2}|\widehat v(\xi)|^2 d\xi
        \nonumber \\
     && \!\!\!\! = \int_{|\xi|<1}(1+\xi^2)^s
        \left( {{\sin(\xi k(\xi)t)}\over {\xi k(\xi)}}\right)^{2}|\widehat v(\xi)|^2 d\xi
        +\int_{|\xi|\ge 1}(1+\xi^2)^s
        \left( {{\sin(\xi k(\xi)t)}\over {\xi k(\xi)}}\right)^{2}|\widehat v(\xi)|^2 d\xi.
    \end{eqnarray*}
    Note that  $~\sin^{2}(\xi k(\xi)t)\le (\xi k(\xi)t)^{2}$ for $~|\xi|<1$, while $~\sin^{2}(\xi k(\xi)t)\le 1$ and
    ${1\over \xi^{2}}\le {2\over {1+\xi^{2}}}$  for $|\xi|\ge 1$. Using these inequalities and the assumption
    (\ref{con-K}) we get
     \begin{eqnarray}
     &&  \!\!\!\!\!\!\!\!\!\!\!\!\!
      \left\Vert {\cal S}(t)v \right\Vert_{s}^{2}
          \leq  t^{2}\int_{|\xi|<1}(1+\xi^2)^{s-1-{\rho\over 2}}(1+\xi^2)^{{\rho\over 2}+1}|\widehat v(\xi)|^2 d\xi
         +{2\over c_{1}^{2}}\int_{|\xi|\ge 1}(1+\xi^2)^{s-1-{\rho\over 2}}|\widehat v(\xi)|^2 d\xi
         \nonumber \\
     &&~~~~~ \leq  \left(t^{2}2^{1+{\rho\over 2}}+{2\over c_{1}^{2}}\right)\int_{\mathbb{R}}(1+\xi^2)^{s-1-{\rho\over 2}}|\widehat v(\xi)|^2 d\xi
    \end{eqnarray}
    which leads to
    \begin{equation}
       \left\Vert {\cal S}(t)v \right\Vert_{s}^{2}\le \left(t^{2}2^{1+{\rho\over 2}}+{2\over c_{1}^{2}}\right)
       \left\Vert v \right\Vert_{s-1-{\rho\over 2}}^{2}.
        \label{est-b}
    \end{equation}
    The third term can be estimated via Minkowski's inequality for integrals and (\ref{est-b}) to get
    \begin{eqnarray}
      \left\Vert \int_{0}^{t}\partial_{x}^{2}{\cal S}(t-\tau)v(\tau)d\tau \right\Vert_{s}
         &\le & \int_{0}^{t} \left\Vert \partial_{x}^{2}{\cal S}(t-\tau)v(\tau) \right\Vert_{s}d\tau \nonumber \\
        &\le & \int_{0}^{t} \left\Vert {\cal S}(t-\tau)v(\tau) \right\Vert_{s+2}d\tau \nonumber \\
        &\le & \int_{0}^{t}  \left( (t-\tau)^{2}2^{1+{\rho\over 2}}+{2\over c_{1}^{2}}\right)^{1/2}\left\Vert v(\tau) \right\Vert_{s+1-{\rho\over 2}}d\tau \nonumber \\
        &\le &   \left(t^{2}2^{1+{\rho\over 2}}+{2\over c_{1}^{2}}\right)^{1/2}\int_{0}^{t} \left\Vert v(\tau) \right\Vert_{s+1-{\rho\over 2}}d\tau .
        \label{int-term}
    \end{eqnarray}
    Summing up the estimates (\ref{first-term}), (\ref{est-b}) and (\ref{int-term}) in (\ref{ode-sol}), we obtain
    \begin{equation}
     \left\Vert u(t) \right\Vert_{s}\leq  \left\Vert \varphi \right\Vert_{s}
            + \left(t^{2}2^{1+{\rho\over 2}}+{2\over c_{1}^{2}}\right)^{1/2} \left( \left\Vert \psi\right\Vert_{s-1-{\rho\over 2}}
            +  \int_{0}^{t}\left\Vert h(\tau) \right\Vert_{s+1-{\rho\over 2}}d\tau \right)
            \label{u-est}
    \end{equation}
    for $0\le t \le T$. On the other hand, differentiating (\ref{ode-sol}) with respect to $t$, we get
    \begin{equation}
        u_{t}(t)=\partial_{t}^{2}{\cal S}(t)\varphi+\partial_{t}{\cal S}(t)\psi
            +\int_{0}^{t}\partial_{x}^{2}\partial_{t}{\cal S}(t-\tau)h(\tau)d\tau.
        \label{time-der}
    \end{equation}
    We have the estimate
    \begin{eqnarray*}
      \left\Vert \partial_{t}^{2}{\cal S}(t)v \right\Vert_{s-1-{\rho\over 2}}^{2}
                &=& \int_{\mathbb{R}}(1+\xi^2)^{s-1-{\rho\over 2}}(\xi k(\xi))^{2} \sin^{2}(\xi k(\xi)t)|\widehat v(\xi)|^2 d\xi  \\
                & \le & c_{2}^{2}\int_{\mathbb{R}}(1+\xi^2)^s {\xi^{2}\over {1+\xi^{2}}} |\widehat v(\xi)|^2 d\xi  \\
                & \le & c_{2}^{2}\int_{\mathbb{R}}(1+\xi^2)^s |\widehat v(\xi)|^2 d\xi
                =  c_{2}^{2} \left\Vert v \right\Vert_{s}^{2}
    \end{eqnarray*}
    for the first term of (\ref{time-der}). For the second and third terms of it we use the estimates
    $\left\Vert \partial_{t}{\cal S}(t)v \right\Vert_{s} \le \left\Vert v \right\Vert_{s}$ and
    \begin{eqnarray*}
        \left\Vert \int_{0}^{t}\partial_{x}^{2}\partial_{t}{\cal S}(t-\tau)v(\tau)d\tau \right\Vert_{s-1-{\rho\over 2}}
         &\le &
         \left\Vert \int_{0}^{t}\partial_{x}^{2}\partial_{t}{\cal S}(t-\tau)v(\tau)d\tau \right\Vert_{s}  \\
         &\le &   \left(t^{2}2^{1+{\rho\over 2}}+{2\over c_{1}^{2}}\right)^{1/2}\int_{0}^{t} \left\Vert v(\tau) \right\Vert_{s+1-{\rho\over 2}}d\tau,
    \end{eqnarray*}
    respectively. The use of these estimates in (\ref{time-der}) leads to
    \begin{equation}
     \left\Vert u_{t}(t) \right\Vert_{s-1-{\rho\over 2}}\leq c_{2} \left\Vert \varphi \right\Vert_{s}
            +  \left\Vert \psi\right\Vert_{s-1-{\rho\over 2}}
            + \int_{0}^{t}\left\Vert h(\tau) \right\Vert_{s+1-{\rho\over 2}}d\tau
            \label{ut-est}
    \end{equation}
    for $0\le t \le T$.
    Adding up  (\ref{u-est}) and (\ref{ut-est}) then gives us the required estimate (\ref{lin-est}).
\end{proof}

\setcounter{equation}{0}
\section{Local Existence for the Nonlinear Problem}
\noindent
In this section, we prove  local (in time) existence and uniqueness of solutions for the Cauchy problem given by
(\ref{non-boussinesq}) and (\ref{ini}). To do this, we use the contraction mapping principle.  First, we assume that
the initial data is such that $~\varphi \in H^{s}$ and $~\psi \in H^{s-1-{\rho\over 2}}$ for some fixed $s>1/2$. Then,
for a fixed $T>0$, we define the Banach space
\begin{equation}
     X(T)=\{u\in C( \left[0,T\right],H^{s})\cap C^{1}( \left[0,T\right],H^{s-1-{\rho\over 2}})\}
            \label{banach-space}
\end{equation}
endowed with the norm
\begin{equation}
     \left\Vert u \right\Vert_{X(T)}
        =\max_{t\in \left[0,T\right]} \left( \left\Vert u(t) \right\Vert_{s}+\left\Vert u_{t}(t)\right\Vert_{s-1-{\rho\over 2}}\right).
        \label{banach-norm}
\end{equation}
At this point, we need to remind the reader that, by the Sobolev Embedding Theorem,  $H^{s}(\mathbb{R})\subset L^{\infty}(\mathbb{R})$ for $s>1/2$. This fact allows us to deduce that $u\in C( \left[0,T\right],L^{\infty})$ whenever
$u\in X(T)$. Also, the fact that, by the Sobolev Embedding Theorem, there is a constant $d$ such that $\left\Vert u(t) \right\Vert_{L^{\infty}} \le d \left\Vert u(t) \right\Vert_{X(T)}$ (for $s>1/2$) will be used in what follows.

We now define a closed subset $Y(T)$ of $X(T)$ as follows
\begin{equation}
     Y(T)=\{u\in X(T) : \left\Vert u \right\Vert_{X(T)}\le A\}
            \label{banach-space}
\end{equation}
for some constant $A > 0$ to be determined later. Consider the initial-value problem
\begin{eqnarray}
    && u_{tt}-Lu_{xx} =(g(w))_{xx},   \label{banach-equ}\\
    && u(x,0) =\varphi (x),\quad u_{t}(x,0)=\psi (x) \label{banach-ini}
\end{eqnarray}
with $w\in Y(T)$. Then, Theorem \ref{theo2.1} implies that the problem (\ref{banach-equ})-\ref{banach-ini}) has a unique solution $u(x,t)$. The map that carries $w\in Y(T)$ into the unique solution $u(x,t)$ of (\ref{banach-equ})-\ref{banach-ini})
will be denoted by ${\cal S}$; that is, $u(x,t)={\cal S}(w)$. We now prove  that, for appropriately chosen $T$ and $A$, the map ${\cal S}$ has a unique fixed point in $Y(T)$. This will be done in three steps. In the first step we establish that
the range of $Y(T)$ under the map ${\cal S}$ belongs to the space $X(T)$. Secondly, we derive suitable estimates on $\left\Vert {\cal S}(w) \right\Vert_{X(T)}$ so that $ {\cal S}(Y(T))\subset Y(T)$. The third step is to show that the mapping ${\cal S}$ is a contraction mapping.

Now we state the following two lemmas, which will be used to bound the nonlinear term in what follows.
\begin{lemma}\label{lem3.1}\cite{runst}
     Assume that $f\in C^{k}(\mathbb{R})$, $f(0) = 0$, $u \in H^{s}\cap L^{\infty}$ and $k = [s]+1$,
    where $s\ge 0$. Then, we have
    \begin{displaymath}
     \left\Vert f(u) \right\Vert_{s} \le C_{1}(M)  \left\Vert u \right\Vert_{s}
    \end{displaymath}
    if $\left\Vert u \right\Vert_{L^{\infty}}\le M$, where $C_{1}(M)$ is a constant dependent on $M$.
\end{lemma}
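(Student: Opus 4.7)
My plan is to reduce the composition estimate to a product-type estimate by exploiting the $L^\infty$ bound on $u$ through Gagliardo--Nirenberg interpolation. Since the statement allows any $s\ge 0$, I would treat the integer case $s=m\in\mathbb{N}$ first and then extend to non-integer $s$ via a Littlewood--Paley/paradifferential argument.

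For integer $s=m$, the Fa\`a di Bruno formula expresses $\partial_x^m f(u)$ as a finite sum of terms of the form $f^{(j)}(u)\,\partial_x^{\alpha_1}u\cdots\partial_x^{\alpha_j}u$ with $\sum_i \alpha_i=m$ and $1\le j\le m$. Since $\|u\|_\infty\le M$ and $f\in C^{m+1}$, each factor $f^{(j)}(u)$ is pointwise bounded by $\sup_{|y|\le M}|f^{(j)}(y)|$, which feeds into $C_1(M)$. Each product of derivatives of $u$ would then be estimated in $L^2$ by H\"older's inequality with exponents $p_i$ satisfying $\sum 1/p_i=1/2$, combined with the Gagliardo--Nirenberg interpolation $\|\partial_x^{\alpha_i}u\|_{L^{p_i}}\le C\|u\|_\infty^{1-\theta_i}\|u\|_m^{\theta_i}$ where $\theta_i=\alpha_i/m$. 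The identity $\sum_i \theta_i=\sum_i \alpha_i/m=1$ is exactly what produces a single factor of $\|u\|_m$, with the remaining $L^\infty$ norms absorbed into $C_1(M)$. The $L^2$ norm of $f(u)$ itself is handled by the hypothesis $f(0)=0$, which yields the pointwise Lipschitz bound $|f(u)|\le \sup_{|y|\le M}|f'(y)|\,|u|$ and hence $\|f(u)\|\le C(M)\|u\|\le C(M)\|u\|_m$.

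For non-integer $s$, with $k=[s]+1$, I would decompose $f(u)$ via a Littlewood--Paley partition of unity and apply Bony's paraproduct formula $f(u)=T_{f'(u)}u+T_u f'(u)+R(u,f'(u))$. Bounding each paraproduct and remainder reduces the problem to controlling $f'(u)$ in $H^{s-1}$ and $u$ in $L^\infty\cap H^s$, which closes by induction on $[s]$. The base case $0\le s<1$ can be treated directly through the Slobodeckij difference-quotient representation of the $H^s$ norm together with the bound $|f(u(x))-f(u(y))|\le C(M)|u(x)-u(y)|$ coming from $\|u\|_\infty\le M$.

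The main obstacle I anticipate is the fractional case: the classical chain rule does not give a clean pointwise formula there, so some form of paradifferential decomposition (or the difference-quotient characterization) is essential. The requirement $k=[s]+1$ is precisely what the paraproduct remainder estimates demand, and $f(0)=0$ is what allows the low-frequency part of $f(u)$ to be controlled by $\|u\|_s$ rather than merely by $\|u\|_\infty$; without this assumption an additive constant would appear on the right. Since the lemma is invoked as a cited result of Runst and Sickel, in practice I would simply appeal to the reference rather than reproducing the full paradifferential argument.
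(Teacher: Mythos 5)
The paper offers no proof of this lemma---it is quoted verbatim as a cited result of Runst and Sickel---so your decision to ultimately defer to the reference matches the paper exactly. Your sketch of the underlying argument (Fa\`a di Bruno plus Gagliardo--Nirenberg interpolation for integer $s$, a paradifferential or difference-quotient argument for fractional $s$, with $f(0)=0$ used to control the $L^2$ piece by $\sup_{|y|\le M}|f'(y)|\,\|u\|$) is the standard Moser-type composition estimate and is sound in outline; the one imprecision is that $f(u)=T_{f'(u)}u+T_u f'(u)+R(u,f'(u))$ is Bony's decomposition of the \emph{product} $u\,f'(u)$, not of the composition $f(u)$, for which one instead needs the paralinearization $f(u)=T_{f'(u)}u+r$ with a smoother remainder $r$.
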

\begin{lemma}\label{lem3.2}\cite{runst}
    Assume that $f\in C^{k}(\mathbb{R})$,  $u, v \in H^{s}\cap L^{\infty}$ and $k = [s]+1$,
    where $s\ge 0$. Then, we have
    \begin{displaymath}
     \left\Vert f(u)-f(v) \right\Vert_{s} \le C_{2}(M)  \left\Vert u-v \right\Vert_{s}
    \end{displaymath}
    if $\left\Vert u \right\Vert_{L^{\infty}}\le M$, $\left\Vert v \right\Vert_{L^{\infty}}\le M$,
    $\left\Vert u \right\Vert_{s}\le M$, and $\left\Vert v \right\Vert_{s}\le M$, where $C_{2}(M)$ is a constant dependent on $M$
    and $s$.
\end{lemma}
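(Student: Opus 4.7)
The plan is to reduce the difference $f(u)-f(v)$ to a product via the fundamental theorem of calculus and then control that product in $H^s$ using a Moser--Kato--Ponce type inequality. Specifically, I would write
\begin{equation*}
f(u)-f(v) = (u-v)\int_0^1 f'\bigl(v+\theta(u-v)\bigr)\,d\theta,
\end{equation*}
so that, setting $w_\theta = v+\theta(u-v)$, Minkowski's integral inequality in $H^s$ reduces matters to a uniform-in-$\theta$ bound on $\|f'(w_\theta)(u-v)\|_s$, where by convexity $\|w_\theta\|_{L^\infty}\le M$ and $\|w_\theta\|_s\le M$ for every $\theta\in[0,1]$.

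For the pointwise product, I would invoke the standard Moser product estimate
\begin{equation*}
\|FG\|_s \le C\bigl(\|F\|_{L^\infty}\|G\|_s+\|F\|_s\|G\|_{L^\infty}\bigr),
\end{equation*}
with $G=u-v$. An immediate issue is that constants are not in $H^s(\mathbb{R})$, so one cannot take $F=f'(w_\theta)$ directly unless $f'(0)=0$. I would therefore split
\begin{equation*}
f'(w_\theta)(u-v) = f'(0)\,(u-v) + \bigl[f'(w_\theta)-f'(0)\bigr](u-v),
\end{equation*}
dispose of the first summand as $|f'(0)|\,\|u-v\|_s$, and apply the product estimate only to the second.

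The two factors of $f'(w_\theta)-f'(0)$ are then handled separately: the $L^\infty$ bound comes from the uniform continuity of $f'$ on $[-M,M]$, and the $H^s$ bound is where Lemma \ref{lem3.1} enters, applied to the auxiliary function $y\mapsto f'(y)-f'(0)$, which vanishes at the origin; this yields $\|f'(w_\theta)-f'(0)\|_s\le C_1(M)\|w_\theta\|_s\le C_1(M)M$. The remaining factor $\|u-v\|_{L^\infty}$ produced by the Moser inequality is absorbed into $\|u-v\|_s$ via the Sobolev embedding $H^s\hookrightarrow L^\infty$, which holds in the regime $s>1/2$ in which this lemma is invoked throughout Section~3. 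Gathering the four contributions gives a $\theta$-independent bound of the form $C_2(M)\|u-v\|_s$, and integrating over $\theta\in[0,1]$ completes the argument.

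The step I expect to be most delicate is matching the regularity assumption: the lemma only requires $f\in C^k$ with $k=[s]+1$, whereas the strategy above invokes Lemma \ref{lem3.1} at the level of $f'$, which would nominally demand $f'\in C^k$. In a careful write-up this gap is closed either by the paraproduct version of the Moser estimate (which trades one derivative for a modulus of continuity) or by first proving the lemma for a smooth truncation of $f$ and then passing to the limit using the uniform bounds $\|u\|_{L^\infty},\|v\|_{L^\infty}\le M$; the bookkeeping around $f'(0)$ and the Sobolev embedding fit in naturally with either variant.
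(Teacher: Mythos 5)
The paper does not actually prove this lemma: it is quoted from Runst and Sickel \cite{runst}, so there is no in-paper argument to compare yours against. Your sketch is the standard elementary derivation (fundamental theorem of calculus, Moser product estimate, and the composition estimate of Lemma \ref{lem3.1} applied to $y\mapsto f'(y)-f'(0)$), and the bookkeeping you describe does close up to the advertised bound $C_{2}(M)\Vert u-v\Vert_{s}$ --- but only under two strengthenings of the hypotheses, both of which you correctly flag. First, absorbing the factor $\Vert u-v\Vert_{L^{\infty}}$ produced by the product estimate requires the embedding $H^{s}\subset L^{\infty}$, i.e.\ $s>1/2$, whereas the lemma is asserted for all $s\ge 0$; since the paper only ever invokes the lemma at regularity $s>1/2$ (in the proof of Lemma \ref{lem3.3}), this is harmless for the application, but it means you have proved a weaker statement than the one displayed. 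Second, applying Lemma \ref{lem3.1} at the level of $f'$ requires $f\in C^{[s]+2}$ rather than $C^{[s]+1}$. Of your two proposed repairs for this, the truncation-and-limit argument does not work as described: regularizing $f$ to $f_{n}$ and running your argument produces a constant that, through Lemma \ref{lem3.1}, depends on derivatives of $f_{n}$ up to order $[s]+2$ on $[-M,M]$, and these are not uniformly bounded in $n$ when $f$ is merely $C^{[s]+1}$, so the estimate does not survive the limit. Recovering the sharp regularity and the full range $s\ge 0$ genuinely requires the paraproduct machinery of \cite{runst}, which is in effect deferring to the cited proof. In short: your argument is a correct, self-contained proof of the version of the lemma the paper actually uses, at the price of one extra derivative on $f$ (equivalently, on $g$ in Theorem \ref{theo3.4}); it is not a proof of the lemma at its stated generality.
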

The following  lemma, which provides a simple sufficient condition for the map  to be a contraction mapping, is the key to the local  existence and uniqueness theorem for the Cauchy problem given by (\ref{non-boussinesq}) and (\ref{ini}).
\begin{lemma}\label{lem3.3}
    Assume that $\rho \ge 2$, $s > 1/2$, $~\varphi \in H^{s}$, $~\psi \in H^{s-1-{\rho\over 2}}$ and
    $~g\in C^{[s]+1}(\mathbb{R})$. Then for suitably chosen $A$ and sufficiently small $T$, the map $\cal S$ is
    a contractive mapping from $Y(T)$ into itself.
\end{lemma}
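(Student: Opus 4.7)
The plan is to apply Theorem \ref{theo2.1} with $h = g(w)$ to produce the candidate $u = \mathcal{S}(w)$, then to use the hypothesis $\rho \geq 2$ to convert the $H^{s+1-\rho/2}$-norm on the forcing into an $H^s$-norm, and finally to invoke Lemmas \ref{lem3.1} and \ref{lem3.2} to tame the nonlinearity. Because $(g(u))_{xx} = ((g(u) - g(0)))_{xx}$, I may replace $g$ by $g - g(0)$ and assume without loss of generality that $g(0) = 0$, which is what Lemma \ref{lem3.1} requires.

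For the self-map property, I would start from the linear estimate (\ref{lin-est}) applied to $u = \mathcal{S}(w)$ with $h = g(w)$. Since $\rho \geq 2$ gives $s+1-\frac{\rho}{2} \leq s$, and since $w \in Y(T)$ satisfies $\|w(\tau)\|_{L^\infty} \leq d\|w(\tau)\|_s \leq dA$ by Sobolev embedding, Lemma \ref{lem3.1} yields
\begin{equation*}
\|g(w(\tau))\|_{s+1-\rho/2} \leq \|g(w(\tau))\|_s \leq C_1(dA)\,\|w(\tau)\|_s \leq C_1(dA)\,A.
\end{equation*}
Plugging into (\ref{lin-est}) produces
\begin{equation*}
\|\mathcal{S}(w)\|_{X(T)} \leq (A_1 + A_2 T)\bigl(\|\varphi\|_s + \|\psi\|_{s-1-\rho/2} + T\, C_1(dA)\,A\bigr).
\end{equation*}
I would then choose $A$ large enough (depending only on the initial data and on $A_1$) so that the first two terms are bounded by $A/2$ for all $T$ in some interval $[0, T_0]$, and afterwards shrink $T$ so that the $T\,C_1(dA)\,A$ contribution is also bounded by $A/2$. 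This yields $\mathcal{S}(Y(T)) \subset Y(T)$.

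The contraction step is analogous. For $w_1, w_2 \in Y(T)$ the difference $\mathcal{S}(w_1) - \mathcal{S}(w_2)$ solves the linear problem with zero initial data and forcing $g(w_1) - g(w_2)$. Theorem \ref{theo2.1} applied to this difference, combined with $\rho \geq 2$ and Lemma \ref{lem3.2} (with $M$ chosen to dominate both $\|w_i\|_{L^\infty}$ and $\|w_i\|_s$ on $Y(T)$), gives
\begin{equation*}
\|\mathcal{S}(w_1) - \mathcal{S}(w_2)\|_{X(T)} \leq (A_1 + A_2 T)\,T\,C_2(M)\,\|w_1 - w_2\|_{X(T)},
\end{equation*}
and a further reduction of $T$ makes the prefactor strictly less than $1$. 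The main subtlety is the coupling between $A$ and $T$: the constants $C_1(dA)$ and $C_2(M)$ depend on $A$, so the order of quantifiers matters — $A$ must be fixed first (using only $T_0$ and the data), and $T$ then chosen small enough depending on $A$. A secondary point worth emphasizing is that the condition $\rho \geq 2$ is used precisely to absorb the two spatial derivatives that fall on the nonlinearity in $(g(w))_{xx}$: (\ref{lin-est}) requires $h \in H^{s+1-\rho/2}$, and only when $\rho \geq 2$ is this norm controlled by the $H^s$-norm appearing in Lemmas \ref{lem3.1}--\ref{lem3.2}.
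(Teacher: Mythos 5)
Your proof is correct and follows essentially the same route as the paper: apply Theorem \ref{theo2.1} with $h=g(w)$, use $s+1-\tfrac{\rho}{2}\le s$ (from $\rho\ge 2$) to pass to the $H^s$-norm, invoke Lemma \ref{lem3.1} for the self-map bound and Lemma \ref{lem3.2} for the contraction, and fix $A$ from the data before shrinking $T$ (the paper takes $A=(A_1+1)a$ explicitly, which is the same quantifier ordering you emphasize). Your explicit normalization to $g(0)=0$ is a welcome detail that the paper's proof leaves implicit when invoking Lemma \ref{lem3.1}.
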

\begin{proof}
To prove the lemma, we need to show that  $ {\cal S}(Y(T))\subset Y(T)$. For that, we use a standard contraction argument.
Let $w\in Y (T)$ be given. The Sobolev Embedding Theorem implies that there is some constant $d$ so that
$~\left\Vert w(t) \right\Vert_{L^{\infty}} \le d \left\Vert w(t) \right\Vert_{s}~$ for $t\in \left[0, T\right]$.
Since $~\left\Vert w(t) \right\Vert_{s}\le \left\Vert w \right\Vert_{X(T)}~$ and
$~\left\Vert w \right\Vert_{X(T)}\le A$, this inequality becomes $~\left\Vert w(t) \right\Vert_{L^{\infty}} \le d A$.
Then by Lemma \ref{lem3.1}
    \begin{equation}
         \left\Vert g(w(t)) \right\Vert_{s} \le C_{1}(dA) \left\Vert w(t) \right\Vert_{s} \label{non-inequality}
    \end{equation}
where $C_{1}(dA)$ is a constant dependent on both $d$ and $A$. Taking $h(x,t) \equiv g(w(x,t))$ in Theorem \ref{theo2.1} we
observe that the solution $u ={\cal S}(w)$ of the problem (\ref{banach-equ})-(\ref{banach-ini}) belongs to
$~C(\left[0,T\right],H^{s})\cap C^{1}( \left[0,T\right],H^{s-1-{\rho\over 2}})~$ and that
\begin{displaymath}
     \left\Vert u(t) \right\Vert_{s}+\left\Vert u_{t}(t)\right\Vert_{s-1-{\rho\over 2}}
         \leq  (A_{1}+A_{2}T) \left(
            \left\Vert \varphi \right\Vert_{s}+\left\Vert \psi\right\Vert_{s-1-{\rho\over 2}}
            + \int_{0}^{t}\left\Vert g(w(\tau)) \right\Vert_{s+1-{\rho\over 2}}d\tau \right).
    \end{displaymath}
Then this inequality and  (\ref{banach-norm}) yield
\begin{equation}
\left\Vert {\cal S}(w) \right\Vert_{X(T)} \le (A_{1}+A_{2}T) \left(\left\Vert \varphi \right\Vert_{s}+\left\Vert \psi\right\Vert_{s-1-{\rho\over 2}}
                        +T \max_{t\in \left[0,T\right]}\left\Vert g(w(\tau))\right\Vert_{s+1-{\rho\over 2}}\right). \label{s-inequality}
 \end{equation}
Noting that $s\pm 1-{\rho\over 2}\le s$ for $\rho\ge 2$, the inequality (\ref{non-inequality}) and (\ref{banach-norm}) lead to
\begin{displaymath}
       \max_{t\in \left[0,T\right]}\left\Vert g(w(\tau))\right\Vert_{s+1-{\rho\over 2}}
            \le C_{1}(dA) \left\Vert w(t) \right\Vert_{X(T)}.
\end{displaymath}
Using this inequality in (\ref{s-inequality}) and keeping in mind that $~\left\Vert w \right\Vert_{X(T)}\le A$, we obtain
\begin{displaymath}
      \left\Vert {\cal S}(w) \right\Vert_{X(T)}
                 \leq    (A_{1}+A_{2}T) \left(\left\Vert \varphi \right\Vert_{s}+\left\Vert \psi\right\Vert_{s-1-{\rho\over 2}}
                        +T C_{1}(dA) A\right).
    \end{displaymath}
Consequently, the inequality  $\left\Vert {\cal S}(w) \right\Vert_{X(T)}\le A$ holds, provided that
\begin{equation}
    (A_{1}+A_{2}T) \left(a+T C_{1}(dA) A\right)\le A, \label{sss}
\end{equation}
where, to shorten expressions, we denote by $a$ the norm of the initial data, namely
 $\left(\left\Vert \varphi \right\Vert_{s}+\left\Vert \psi\right\Vert_{s-1-{\rho\over 2}}\right)=a$.
Since the choice of our constant $A$ depends on a choice of $a$, we note that the condition under which $\cal S$  is a
contraction mapping depends on the choice of the norm of the initial data, i.e.,
$\left(\left\Vert \varphi \right\Vert_{s}+\left\Vert \psi\right\Vert_{s-1-{\rho\over 2}}\right)$.

Let $A=\lambda a$ where $\lambda$ is a constant to be chosen later. Expanding the parentheses in (\ref{sss}) and
substituting  $A=\lambda a$ into the
resulting inequality we get
\begin{displaymath}
    A_{1}a+aT [A_{2}+A_{1}\lambda C_{1}(d\lambda a)+T A_{2}\lambda C_{1}(d\lambda a)]\le \lambda a.
\end{displaymath}
If we set $\lambda=A_{1}+1$, this inequality becomes
\begin{displaymath}
T [A_{2}+A_{1}\lambda C_{1}(d\lambda a)+T A_{2}\lambda C_{1}(d\lambda a)]\le 1.
\end{displaymath}
Note that this inequality holds if $T$ is small enough. For a suitable choice of $A$ and $T$ we have
$\left\Vert {\cal S}(w) \right\Vert_{X(T)}\le A$ and hence $ {\cal S}(Y(T))\subset Y(T)$.

Now, let $w, \tilde{w}\in Y (T)$  with $ u={\cal S}(w)$, $ \tilde{u}={\cal S}(\tilde{w})$. Set  $V=u-\tilde{u}$, $W=w-\tilde{w}$.
Then $V$ satisfies (\ref{banach-equ}) with zero initial data:
\begin{eqnarray}
    && V_{tt}-LV_{xx} =(g(w)-g(\tilde{w}))_{xx},   \label{banach-equ-a}\\
    && V(x,0) =0,\quad V_{t}(x,0)=0. \label{banach-ini-a}
\end{eqnarray}
Using the estimate (\ref{lin-est}) of Theorem \ref{theo2.1} gives
\begin{equation}
      \left\Vert V(t) \right\Vert_{s}+\left\Vert V_{t}(t)\right\Vert_{s-1-{\rho\over 2}}
                \le    (A_{1}+A_{2}T) \int_{0}^{t} \left\Vert g(w(\tau))-g(\tilde{w}(\tau)) \right\Vert_{s+1-{\rho\over 2}} d\tau .
                \label{v-estimate}
\end{equation}
 Note that $s+1-{\rho\over 2}\le s$  for $\rho\ge 2$ and consequently
 $ \left\Vert g(w(\tau))-g(\tilde{w}(\tau)) \right\Vert_{s+1-{\rho\over 2}}
        \le \left\Vert g(w(\tau))-g(\tilde{w}(\tau)) \right\Vert_{s} $.
            On the other hand, by Lemma \ref{lem3.2}, we have
\begin{displaymath}
  \left\Vert g(w(\tau))-g(\tilde{w}(\tau)) \right\Vert_{s+1-{\rho\over 2}}
        \le C_{2}(dA)\left\Vert w(\tau)-\tilde{w}(\tau) \right\Vert_{s} = C_{2}(dA) \left\Vert W(\tau)\right\Vert_{s},
\end{displaymath}
where $C_{2}(dA)$ is a constant dependent on both $d$ and $A$. Substitution of this inequality into (\ref{v-estimate}) yields
 \begin{equation}
      \left\Vert V(t) \right\Vert_{s}+\left\Vert V_{t}(t)\right\Vert_{s-1-{\rho\over 2}}
                \le    (A_{1}+A_{2}T) C_{2}((A_{1}+1)da)T \max_{t\in \left[0,T\right]}\left\Vert W(t) \right\Vert_{s},
                \label{}
\end{equation}
where we have used  $A=\lambda a=(A_{1}+1)a$. Using this inequality and the norm defined in (\ref{banach-norm}) one gets
\begin{displaymath}
    \left\Vert V \right\Vert_{X(T)} \le
        (A_{1}+A_{2}T) C_{2}\left((A_{1}+1)da\right)T \left\Vert W \right\Vert_{X(T)}.
\end{displaymath}
We now choose $T$ small enough so that $(A_{1}+A_{2}T) C_{2}((A_{1}+1)da)T\le {1\over 2}$. With that choice of $T$ the mapping
$\cal S$ becomes contractive. This completes the proof of the lemma.
\end{proof}

The Banach Fixed Point Theorem states that every contraction mapping has a unique fixed point. Since the map $\cal S$ is a
contraction from a closed subset $Y(T)$ of the Banach space $X(T)$ into $Y(T)$ by Lemma \ref{lem3.3}, there is a
unique $u\in Y(T)$ such that $S(u) = u$. We have thus proved the following local existence and uniqueness result
for the nonlinear Cauchy problem.
\begin{theorem}\label{theo3.4}
    Assume that $\rho \ge 2$, $s>{1\over 2}$, $~\varphi \in H^{s}$, $~\psi \in H^{s-1-{\rho\over 2}}$ and
    $~g\in C^{[s]+1}(\mathbb{R})$. Then there is some $T>0$
     such that the Cauchy problem given by (\ref{non-boussinesq}) and (\ref{ini}) has a unique solution in
     $~C( \left[0,T\right],H^{s})\cap C^{1}( \left[0,T\right],H^{s-1-{\rho\over 2}})$.
\end{theorem}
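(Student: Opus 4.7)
The plan is to deduce Theorem~\ref{theo3.4} from Lemma~\ref{lem3.3} via the Banach Fixed Point Theorem, and then to upgrade uniqueness from the closed ball $Y(T)$ to the full solution class $C([0,T],H^{s})\cap C^{1}([0,T],H^{s-1-\rho/2})$ by a standard continuation argument.

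First, with $a=\|\varphi\|_{s}+\|\psi\|_{s-1-\rho/2}$, I would fix $A=(A_{1}+1)a$ as in the proof of Lemma~\ref{lem3.3}, and then choose $T>0$ small enough that both smallness conditions in that proof hold simultaneously: the condition
\[
T\bigl[A_{2}+A_{1}\lambda C_{1}(d\lambda a)+TA_{2}\lambda C_{1}(d\lambda a)\bigr]\le 1
\]
guaranteeing $\mathcal{S}(Y(T))\subset Y(T)$, and the condition $(A_{1}+A_{2}T)\,C_{2}((A_{1}+1)da)\,T\le \tfrac{1}{2}$ guaranteeing that $\mathcal{S}$ is a strict contraction on $Y(T)$. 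Since $Y(T)$ is a closed subset of the Banach space $X(T)$, the Banach Fixed Point Theorem produces a unique $u\in Y(T)$ with $\mathcal{S}(u)=u$. By the very definition of $\mathcal{S}$, this $u$ solves the linearized problem (\ref{banach-equ})-(\ref{banach-ini}) with $w=u$; in other words, $u$ is a solution of (\ref{non-boussinesq})-(\ref{ini}) in the desired regularity class.

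Next, I would address uniqueness outside of $Y(T)$. Suppose $\tilde{u}$ is another solution of (\ref{non-boussinesq})-(\ref{ini}) belonging to $C([0,T],H^{s})\cap C^{1}([0,T],H^{s-1-\rho/2})$. Since $\tilde{u}(0)=\varphi$ and $\tilde{u}_{t}(0)=\psi$, the quantity $\|\tilde{u}(t)\|_{s}+\|\tilde{u}_{t}(t)\|_{s-1-\rho/2}$ is continuous in $t$ and starts at $a<A$, so there is a maximal $T^{\ast}\in(0,T]$ on which $\|\tilde{u}\|_{X(T^{\ast})}\le A$. On $[0,T^{\ast}]$ both $u$ and $\tilde{u}$ lie in $Y(T^{\ast})$, and both are fixed points of $\mathcal{S}$ on this interval; after possibly shrinking $T^{\ast}$ further so that $\mathcal{S}$ is still contractive there (which is automatic because the contraction constant is monotone in the time length), the uniqueness clause of the Banach Fixed Point Theorem forces $u=\tilde{u}$ on $[0,T^{\ast}]$. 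A standard open-closed continuation argument then pushes this agreement forward to all of $[0,T]$.

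The step I expect to require the most care is not the fixed-point invocation itself, which is essentially immediate once Lemma~\ref{lem3.3} is in place, but rather this uniqueness extension: one has to verify that the set of $t$ on which $u=\tilde{u}$ is both open and closed, and that the contraction radius $A$ can be chosen uniformly along the way so that the Banach step can be repeated. Everything else — the existence, the regularity $u\in C([0,T],H^{s})\cap C^{1}([0,T],H^{s-1-\rho/2})$, and the fact that $u$ actually solves the nonlinear equation — is inherited directly from Theorem~\ref{theo2.1} applied with $h=g(u)$, together with Lemma~\ref{lem3.1} which ensures $g(u)\in C([0,T],H^{s+1-\rho/2})$ under the hypothesis $\rho\ge 2$.
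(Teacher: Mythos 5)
Your proposal matches the paper's argument: Theorem~\ref{theo3.4} is obtained there exactly by combining Lemma~\ref{lem3.3} with the Banach Fixed Point Theorem on the closed ball $Y(T)$. The only difference is that you additionally spell out why uniqueness holds in the full class $C([0,T],H^{s})\cap C^{1}([0,T],H^{s-1-\rho/2})$ rather than merely in $Y(T)$ — a point the paper leaves implicit — and your continuity-plus-contraction continuation argument for that step is correct.
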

\begin{remark}\label{rem3.5}
The condition $s>1/2$ is necessary for the control of the nonlinear term $g(u)$ through Lemmas \ref{lem3.1} and \ref{lem3.2}.
On the other hand, note that the Sobolev space $~ H^{s-1-{\rho\over 2}}$ where $\psi$ and $u_{t}(t)$ lie may have a negative exponent.
\end{remark}

\setcounter{equation}{0}
\section{Global Existence for the Nonlinear Problem}
\noindent
In this section, we will prove that,  under suitable assumptions on the initial data, a unique solution to the nonlinear Cauchy problem given by
(\ref{non-boussinesq}) and (\ref{ini}) exists for all $t\in [0,\infty)$. The basic idea behind that proof is to show that the local solution of Section
3 can be extended uniquely to $[0,\infty)$.   The main ingredient is the following  theorem.

\begin{theorem}\label{theo4.1}
     Assume that $\rho \ge 2$, $s>{1\over 2}$, $~\varphi \in H^{s}$, $~\psi \in H^{s-1-{\rho\over 2}}$ and $~g\in C^{[s]+1}(\mathbb{R})$ and that the unique
     solution of the  Cauchy problem is defined on the maximal time interval $[0, T_{\max})$. If the maximal time is finite, i.e. $T_{\max}<\infty$, then
     \begin{displaymath}
    \limsup_{t \rightarrow T_{\max}^{-}}~[ \left\Vert u(t) \right\Vert_{s}+\left\Vert u_{t}(t)\right\Vert_{s-1-{\rho\over 2}}]=\infty .
    \end{displaymath}
\end{theorem}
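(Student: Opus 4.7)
The plan is to argue by contradiction. I would suppose $T_{\max}<\infty$ but that the quantity
\[
N(t)=\left\Vert u(t)\right\Vert_{s}+\left\Vert u_{t}(t)\right\Vert_{s-1-{\rho\over 2}}
\]
stays bounded, say $N(t)\le M$ for all $t\in[0,T_{\max})$. The goal is then to extend the solution past $T_{\max}$, which would contradict maximality.

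The key observation is that the local existence time produced in Lemma \ref{lem3.3} (and hence in Theorem \ref{theo3.4}) depends on the initial data only through the quantity $a=\left\Vert \varphi\right\Vert_{s}+\left\Vert \psi\right\Vert_{s-1-{\rho\over 2}}$. Indeed, the choice $A=(A_{1}+1)a$ and the smallness requirement on $T$ reduce to
\[
T\bigl[A_{2}+A_{1}\lambda C_{1}(d\lambda a)+TA_{2}\lambda C_{1}(d\lambda a)\bigr]\le 1,
\]
so one can find a continuous, monotone nondecreasing function $\delta\colon[0,\infty)\to(0,\infty)$ such that whenever the initial data satisfies $a\le M$, the Cauchy problem admits a solution on $[0,\delta(M)]$ in the space $C([0,\delta(M)],H^{s})\cap C^{1}([0,\delta(M)],H^{s-1-{\rho/2}})$. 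I would make this dependence explicit first, simply by re-reading the proof of Lemma \ref{lem3.3}.

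Given that, I pick $t_{0}\in[0,T_{\max})$ such that $T_{\max}-t_{0}<\delta(M)/2$, and I take $(u(t_{0}),u_{t}(t_{0}))$ as new initial data. Since $N(t_{0})\le M$, Theorem \ref{theo3.4} furnishes a solution $\tilde u$ to (\ref{non-boussinesq}) with data $(u(t_{0}),u_{t}(t_{0}))$ on the interval $[t_{0},t_{0}+\delta(M)]$. By the uniqueness part of Theorem \ref{theo3.4}, $\tilde u$ coincides with $u$ on $[t_{0},T_{\max})$, and thus defines an extension of $u$ to $[0,t_{0}+\delta(M)]$, an interval strictly larger than $[0,T_{\max})$. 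This contradicts the maximality of $T_{\max}$, so the assumption $\limsup_{t\to T_{\max}^{-}}N(t)<\infty$ must fail.

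The main obstacle is the bookkeeping step: verifying carefully that the local existence time in Lemma \ref{lem3.3} truly depends only on $a$ (and on the fixed constants $A_{1},A_{2},d,c_{1},c_{2}$ and the nonlinearity $g$, through $C_{1}$) and not on any other feature of the initial data. Once this uniform lower bound on the life-span is in hand, the translation-in-time and uniqueness arguments are routine.
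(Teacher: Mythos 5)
Your proof is correct and takes essentially the same continuation approach as the paper, which also extends the local solution by re-solving from a time close to $T_{\max}$ and patching. If anything you are more careful than the paper on the one point that matters: you make explicit that the local existence time in Lemma~\ref{lem3.3} depends on the data only through $a=\left\Vert \varphi \right\Vert_{s}+\left\Vert \psi\right\Vert_{s-1-{\rho\over 2}}$, so that a bound $N(t)\le M$ yields a uniform life-span $\delta(M)$ from any starting time, which is exactly what lets the solution be pushed past $T_{\max}$ and is left implicit in the paper's iteration argument.
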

\begin{proof}
    The main approach, which we use below, is to look for the local solutions of the Cauchy problems on finite time intervals and then is to  patch  those
    local solutions together in a continuous manner.  Suppose that $u$ is the unique local solution of the Cauchy problem given by
    (\ref{non-boussinesq}) and (\ref{ini}) for $[0, T_{1}]$. We then consider the following Cauchy problem
    \begin{eqnarray*}
        && u_{tt}-Lu_{xx} =(g(u))_{xx}, \quad x\in \mathbb{R},\quad t>T_{1},  \\
        && u(x,T_{1}) =\varphi_{1} (x),\quad u_{t}(x,T_{1})=\psi_{1} (x),
    \end{eqnarray*}
    where $\varphi_{1}\in H^{s}$ and  $\psi_{1}\in H^{s-1-{\rho\over 2}}$. Note that the initial data of this shifted problem
    are obtained from the unique solution on $[0, T_{1}]$, that is, $\varphi_{1}=u(x,T_{1})$ and $\psi_{1}=u_{t}(x,T_{1})$.
    Applying  Theorem \ref{theo3.4} to the shifted problem, we see that there exists a unique solution on an interval $[T_{1}, T_{2}]$ for some
    $T_{2} > T_{1}$. Patching the two local solutions yields an extended unique local solution on $[0, T_{2}]$. Repeating this process $i$ times iteratively,
    we can extend the unique solution  to the interval $[0, T_{i+1}]$ as long as   the conditions  $u(x,T_{j})= \varphi_{j} \in H^{s}$ and
    $u_{t}(x,T_{j})=\psi_{j}\in H^{s-1-{\rho\over 2}}$ for $j=1,2,..i$ hold. This implies that, as long as
    $\limsup_{t \rightarrow T^{-}} [ \left\Vert u(t) \right\Vert_{s}+\left\Vert u_{t}(t)\right\Vert_{s-1-{\rho\over 2}}] <\infty$,
    we can extend the solution to $[0, T)$ for any $T$, using a similar approach. We then conclude that the solution cannot be extended beyond some
    finite $T_{\max}$ if and only if
    $\limsup_{t \rightarrow T_{\max}^{-}} [ \left\Vert u(t) \right\Vert_{s}+\left\Vert u_{t}(t)\right\Vert_{s-1-{\rho\over 2}}]=\infty$.
\end{proof}
We now prove that the unique solution of the Cauchy problem satisfies an energy identity.
\begin{theorem}\label{theo4.2}
    Suppose that $u(x,t)$ is a solution of the Cauchy problem given by (\ref{non-boussinesq}) and (\ref{ini}) on some interval $[0, T)$.  Let $K=L^{1/2}$, $G(u) =\int_{0}^{u}g(p)dp$ and $\Lambda^{-\alpha}w={\cal F}^{-1}[ |\xi|^{-\alpha}{\cal F}w]$, where ${\cal F}$ and ${\cal F}^{-1}$ denote Fourier transform and inverse Fourier transform   in the $x$-variable, respectively. If $\Lambda^{-1}\psi \in L^{2}$, $K\varphi \in L^{2}$ and $G(\varphi) \in L^{1}$, then, for any $t\in [0, T)$, the energy identity
    \begin{equation}
     E(t)=\left\Vert \Lambda^{-1}u_{t}(t) \right\Vert^{2}+\left\Vert Ku(t) \right\Vert^{2}+2\int_{\mathbb{R}}G(u(x,t))dx=E(0),
        \label{energy}
    \end{equation}
    is satisfied.
\end{theorem}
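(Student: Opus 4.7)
The plan is to establish the identity by the standard energy method: multiply the equation by $\Lambda^{-2}u_t$ (equivalently, apply $\Lambda^{-2}$ to the equation and take the $L^2$ inner product with $u_t$), rewrite each term as a time derivative, and integrate from $0$ to $t$. The crucial algebraic observation is that $\Lambda^{-2}$ and $\partial_x^2$ are Fourier multipliers with symbols $|\xi|^{-2}$ and $-\xi^2$, so $\Lambda^{-2}\partial_x^2=-I$. Applying $\Lambda^{-2}$ to (\ref{non-boussinesq}) thus yields, at the formal level,
\begin{equation*}
 \Lambda^{-2}u_{tt}+Lu=-g(u).
\end{equation*}

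Taking the $L^2$ inner product with $u_t$ and using self-adjointness of $\Lambda^{-2}$ and $L=K^{2}$, together with the chain rule, produces
\begin{equation*}
 \langle \Lambda^{-2}u_{tt},u_t\rangle=\tfrac{1}{2}\tfrac{d}{dt}\|\Lambda^{-1}u_t\|^{2},\quad
 \langle Lu,u_t\rangle=\tfrac{1}{2}\tfrac{d}{dt}\|Ku\|^{2},\quad
 \langle g(u),u_t\rangle=\tfrac{d}{dt}\!\int_{\mathbb{R}}\! G(u)\,dx.
\end{equation*}
Summing and integrating in $t$ then delivers $E(t)=E(0)$. To justify this at the regularity level of Theorem \ref{theo3.4}, I would use a standard density argument: approximate the initial data by sequences $\varphi_n,\psi_n$ of Schwartz functions whose Fourier transforms vanish in a neighborhood of $\xi=0$, chosen so that simultaneously $\varphi_n\to\varphi$ in $H^s$, $K\varphi_n\to K\varphi$ in $L^2$, $\psi_n\to\psi$ in $H^{s-1-\rho/2}$, $\Lambda^{-1}\psi_n\to\Lambda^{-1}\psi$ in $L^2$, and $G(\varphi_n)\to G(\varphi)$ in $L^1$. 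For such data the solutions $u_n$ produced by Theorem \ref{theo3.4} have Fourier transforms whose low-frequency support is controlled through formula (\ref{ode-sol}), so all formal manipulations are fully justified and $E_n(t)=E_n(0)$ exactly. Continuous dependence on data (a byproduct of the contraction in Lemma \ref{lem3.3}), combined with Lemma \ref{lem3.1} applied to $G$, allows passage to the limit term by term.

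The main obstacle is the $\Lambda^{-1}u_t$ term, since the multiplier $|\xi|^{-1}$ is singular at $\xi=0$ and is not controlled by any $H^s$ norm; the assumption $\Lambda^{-1}\psi\in L^{2}$ is a purely low-frequency condition on the data, and one must check that this property propagates under the flow. This is visible directly in the Fourier representation (\ref{ode-sol}) differentiated in $t$: after dividing by $|\xi|$, the $\widehat{\psi}$ contribution becomes $\widehat{\psi}(\xi)|\xi|^{-1}\cos(\xi k(\xi)t)$, controlled by $\|\Lambda^{-1}\psi\|$; the $\widehat{\varphi}$ contribution becomes $-\operatorname{sgn}(\xi)k(\xi)\widehat{\varphi}(\xi)\sin(\xi k(\xi)t)$, controlled by $\|K\varphi\|$; and the Duhamel integrand becomes $-\operatorname{sgn}(\xi)\cos(\xi k(\xi)(t-\tau))\widehat{g(u)}(\xi,\tau)$, controlled by $\|g(u(\tau))\|$, which is finite by Lemma \ref{lem3.1}. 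These a priori bounds ensure every term in $E(t)$ is finite along the evolution and legitimize the integration-by-parts and differentiation-under-the-integral steps used above.
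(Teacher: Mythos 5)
Your argument follows essentially the same route as the paper's proof: the same reduction of (\ref{non-boussinesq}) to $\Lambda^{-2}u_{tt}+K^{2}u+g(u)=0$, the same $L^{2}$ pairing with $u_{t}$ using self-adjointness of $\Lambda^{-1}$ and $K$, and the same identification of each term as a time derivative; the paper handles the only delicate point, the propagation of $\Lambda^{-1}u_{t}\in L^{2}$, by citing Lemma 4.1 of the earlier work \cite{duruk1}, which is exactly the Fourier-side computation on the differentiated formula (\ref{ode-sol}) that you sketch. One small slip in that computation: after dividing $\widehat{u_{t}}$ by $|\xi|$, the Duhamel multiplier is $\xi^{2}/|\xi|=|\xi|$, not $\operatorname{sgn}(\xi)$, so that contribution is controlled by $\int_{0}^{t}\left\Vert g(u(\tau))\right\Vert_{1}d\tau$ rather than by the $L^{2}$ norm of $g(u)$ --- still finite here because $u(t)\in H^{\rho/2}$ with $\rho\ge 2$ and Lemma \ref{lem3.1} applies, so the conclusion is unaffected.
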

\begin{proof}
    Applying the Fourier transform to  (\ref{non-boussinesq}) and using the definition of $\Lambda$ in the resulting equation yields
    \begin{equation}
    \Lambda^{-2}u_{tt}+K^{2}u+g(u)=0. \label{equ-motion}
    \end{equation}
    If we multiply both sides of this equation by $u_{t}$ and then integrate over $\mathbb{R}$ with respect to $x$, we get
    \begin{equation}
     \langle \Lambda^{-2}u_{tt}+K^{2}u+g(u), u_{t} \rangle =0.
        \label{inner}
    \end{equation}
    Since $\Lambda^{-1}$  and $K$ are self-adjoint operators,  (\ref{inner}) becomes
    \begin{displaymath}
     \langle \Lambda^{-1}u_{tt}, \Lambda^{-1}u_{t} \rangle + \langle Ku, Ku_{t} \rangle + \langle g(u), u_{t} \rangle=0
    \end{displaymath}
    from which we deduce
    \begin{displaymath}
     {1\over 2}{d\over {dt}}\left(
        \left\Vert \Lambda^{-1}u_{t} \right\Vert^{2}+\left\Vert Ku \right\Vert^{2}
        +2\int_{\mathbb{R}}\left( \int_{0}^{u}g(p)dp\right)dx \right)=0.
    \end{displaymath}
    And from this equation we get  (\ref{energy}). To justify rigorously the above formal computation, first we note that
    $\Lambda^{-1}\psi \in L^{2}$, $K\varphi \in L^{2}$  imply $\psi \in H^{-1}$, $\varphi \in H^{\rho/2}$, respectively.
    By Theorem \ref{theo3.4} we have $u(t)\in H^{\rho/2}$ and thus $Ku(t)\in L^{2}$, $G(u(t))\in L^{1}$ for all $t\in [0,t)$.
    An argument similar to that in Lemma 4.1 of \cite{duruk1} shows that $\Lambda^{-1}u(t)\in L^{2}$.
\end{proof}
The energy  identity leads to global existence through the following theorem when $s={\rho\over 2}$.
\begin{theorem}\label{theo4.3}
    Assume that $\rho \ge 2$,  $g\in C^{[{\rho \over 2}]+1}(\mathbb{R})$, $\varphi \in H^{{\rho \over 2}}$,
    $\psi\in H^{-1}$, $\Lambda^{-1}\psi \in L^{2}$, $G(\varphi) \in L^{1}$ and $G(u) \ge 0$ for all $u\in \mathbb{R}$. Then
    the  Cauchy problem given by (\ref{non-boussinesq}) and (\ref{ini}) has a unique global solution
    $u\in C\left( [0,\infty),H^{{\rho \over 2}}\right)\cap C^{1}\left( [0,\infty),H^{-1}\right)$.
\end{theorem}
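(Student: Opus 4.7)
The plan is to combine the local existence theorem (Theorem \ref{theo3.4}), the energy identity (Theorem \ref{theo4.2}), and the blow-up alternative (Theorem \ref{theo4.1}) by showing that the energy controls the Sobolev norms that appear in the blow-up criterion, thereby ruling out finite-time blow-up.

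First I would verify that with $s=\rho/2$ all hypotheses of Theorem \ref{theo3.4} are met: since $\rho\ge 2$ we have $s=\rho/2\ge 1>1/2$, and $s-1-\rho/2=-1$, so the data $\varphi\in H^{\rho/2}$ and $\psi\in H^{-1}$ fit the framework exactly. Hence a unique local solution exists on some maximal interval $[0,T_{\max})$, lying in $C([0,T_{\max}),H^{\rho/2})\cap C^1([0,T_{\max}),H^{-1})$. The assumptions $\Lambda^{-1}\psi\in L^2$ and $G(\varphi)\in L^1$ are given, and $K\varphi\in L^2$ follows from $\varphi\in H^{\rho/2}$ together with the upper bound $k(\xi)\le c_2(1+\xi^2)^{\rho/4}$; so Theorem \ref{theo4.2} applies and yields the conservation law $E(t)=E(0)$ for $t\in[0,T_{\max})$.

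The heart of the argument is the a priori bound. Since $G(u)\ge 0$ by hypothesis, the energy identity gives
\begin{equation*}
\left\Vert \Lambda^{-1}u_t(t)\right\Vert^2+\left\Vert Ku(t)\right\Vert^2 \le E(0)
\end{equation*}
uniformly in $t$. The lower bound in (\ref{con-K}) gives $k(\xi)^2\ge c_1^2(1+\xi^2)^{\rho/2}$, so $\left\Vert u(t)\right\Vert_{\rho/2}\le c_1^{-1}\left\Vert Ku(t)\right\Vert$. For the time derivative, the elementary pointwise inequality $(1+\xi^2)^{-1}\le|\xi|^{-2}$ (valid for all $\xi\neq 0$, since $1+\xi^2\ge \xi^2$) yields $\left\Vert u_t(t)\right\Vert_{-1}\le \left\Vert \Lambda^{-1}u_t(t)\right\Vert$. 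Combining these,
\begin{equation*}
\left\Vert u(t)\right\Vert_{\rho/2}+\left\Vert u_t(t)\right\Vert_{-1}\le \left(c_1^{-1}+1\right)\sqrt{E(0)}
\end{equation*}
for every $t\in[0,T_{\max})$.

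With this uniform bound in hand, Theorem \ref{theo4.1} (applied with $s=\rho/2$, so $s-1-\rho/2=-1$) forces $T_{\max}=\infty$, since otherwise the limsup of $\left\Vert u(t)\right\Vert_{\rho/2}+\left\Vert u_t(t)\right\Vert_{-1}$ as $t\to T_{\max}^-$ would have to be infinite. This gives the claimed global solution $u\in C([0,\infty),H^{\rho/2})\cap C^1([0,\infty),H^{-1})$. The only delicate point, and the one I would write most carefully, is the comparison of $\left\Vert \cdot\right\Vert_{-1}$ with $\left\Vert \Lambda^{-1}\cdot\right\Vert$; everything else is a straightforward chaining of results already established in the paper.
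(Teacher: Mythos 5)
Your proposal is correct and follows essentially the same route as the paper: use the nonnegativity of $G$ to extract the uniform bound $\left\Vert \Lambda^{-1}u_t\right\Vert^2+\left\Vert Ku\right\Vert^2\le E(0)$ from the energy identity, convert this into a bound on $\left\Vert u(t)\right\Vert_{\rho/2}+\left\Vert u_t(t)\right\Vert_{-1}$ via the lower bound in (\ref{con-K}) and the inequality $(1+\xi^2)^{-1}\le \xi^{-2}$, and invoke the continuation criterion of Theorem \ref{theo4.1}. Your explicit remark that $K\varphi\in L^2$ follows from $\varphi\in H^{\rho/2}$ (so that Theorem \ref{theo4.2} indeed applies) is a detail the paper leaves implicit, but the argument is otherwise the same.
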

\begin{proof}
    Suppose the solution exists on some interval $[0, T)$. If $G(u)\ge 0$, it follows from (\ref{energy}) that
    \begin{displaymath}
        \left\Vert \Lambda^{-1}u_{t} \right\Vert^{2}+\left\Vert Ku \right\Vert^{2} \le E(0).
     \end{displaymath}
     Furthermore, we have
     \begin{eqnarray}
      \left\Vert u_{t}(t) \right\Vert^{2}_{s-1-{\rho\over 2}}
                &=&  \left\Vert u_{t}(t) \right\Vert^{2}_{-1}= \int {1\over {1+\xi^{2}}}|\widehat{u}_{t}|^{2}d\xi \nonumber \\
                & \le &  \int {1\over {\xi^{2}}}|\widehat{u}_{t}|^{2}d\xi = \int | \widehat{\Lambda^{-1}u_{t}}|^{2}d\xi
                        =  \left\Vert \Lambda^{-1}u_{t} \right\Vert^{2} \le E(0)
                        \label{est-aa}
    \end{eqnarray}
    and
    \begin{eqnarray}
      \left\Vert u(t) \right\Vert^{2}_{s}
                &=&  \left\Vert u(t) \right\Vert^{2}_{{\rho\over 2}}= \int (1+\xi^{2})^{{\rho\over 2}}|\widehat{u}(\xi)|^{2}d\xi  \nonumber \\
                & \le &  {1\over c_{1}^{2}} \int k^{2}(\xi)|\widehat{u}(\xi)|^{2}d\xi
                        = {1\over c_{1}^{2}}\left\Vert Ku \right\Vert^{2}  \le {1\over c_{1}^{2}} E(0),
                        \label{est-bb}
    \end{eqnarray}
    where we have used (\ref{con-K}). Combining the two inequalities,  (\ref{est-aa}) and (\ref{est-bb}), we get
    \begin{displaymath}
    \limsup_{t \rightarrow T^{-}}  ~[ \left\Vert u(t) \right\Vert_{{\rho\over 2}}+\left\Vert u_{t}(t)\right\Vert_{-1}]
            \le \left(1+{1\over c_{1}}\right) \left(E(0)\right)^{1/2} < \infty .
    \end{displaymath}
    This is true for any $T>0$, therefore, $T_{\max}=\infty$ and we have the unique global solution
    $u(x,t)\in C\left( [0,\infty),H^{{\rho\over 2}}\right)\cap C^{1}\left( [0,\infty),H^{-1}\right)$.
\end{proof}
We now extend the above theorem to the general case.
\begin{theorem}\label{theo4.4}
     Assume that $\rho \ge 2$, $s>1/2$, $g\in C^{[s]+1}(\mathbb{R})$, $\varphi \in H^{s}$,
    $\psi\in H^{s-1-{\rho\over 2}}$,  $\Lambda^{-1}\psi \in L^{2}$, $K\varphi \in L^{2}$ and $G(\varphi) \in L^{1}$ and $G(u) \ge 0$ for all $u\in \mathbb{R}$. Then
    the Cauchy problem given by (\ref{non-boussinesq}) and (\ref{ini}) has a unique global solution
    $u\in C\left( [0,\infty),H^{s}\right)\cap C^{1}\left( [0,\infty),H^{s-1-{\rho\over 2}}\right)$.
\end{theorem}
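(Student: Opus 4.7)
The plan is to combine the blow-up criterion of Theorem \ref{theo4.1} with a Gronwall bootstrap driven by the energy identity of Theorem \ref{theo4.2}. Suppose for contradiction that $T_{\max}<\infty$; I will show $\left\Vert u(t)\right\Vert_{s}+\left\Vert u_{t}(t)\right\Vert_{s-1-\rho/2}$ remains bounded on $[0,T_{\max})$, contradicting Theorem \ref{theo4.1} and therefore forcing $T_{\max}=\infty$.

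First, I would secure a uniform $L^{\infty}$ bound on $u$. The hypotheses $K\varphi\in L^{2}$, $\Lambda^{-1}\psi\in L^{2}$, $G(\varphi)\in L^{1}$ are exactly the standing assumptions of Theorem \ref{theo4.2}, so the identity \eqref{energy} holds on $[0,T_{\max})$. Combined with $G(u)\ge 0$ this gives $\left\Vert Ku(t)\right\Vert^{2}\le E(0)$, and the lower bound in \eqref{con-K} then yields $\left\Vert u(t)\right\Vert_{\rho/2}^{2}\le E(0)/c_{1}^{2}$ for every $t\in[0,T_{\max})$. Since $\rho/2\ge 1>1/2$, the Sobolev embedding produces a uniform bound $\left\Vert u(t)\right\Vert_{\infty}\le M$ with $M$ depending only on $E(0)$ and $c_{1}$.

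Next, I would apply Theorem \ref{theo2.1} on any $[0,T]\subset[0,T_{\max})$ with source $h=g(u)$, obtaining
\begin{equation*}
\left\Vert u(t)\right\Vert_{s}+\left\Vert u_{t}(t)\right\Vert_{s-1-\rho/2}\le (A_{1}+A_{2}T)\Bigl(a+\int_{0}^{t}\left\Vert g(u(\tau))\right\Vert_{s+1-\rho/2}d\tau\Bigr),
\end{equation*}
where $a=\left\Vert\varphi\right\Vert_{s}+\left\Vert\psi\right\Vert_{s-1-\rho/2}$. The key point is that $\rho\ge 2$ gives $s+1-\rho/2\le s$, so $\left\Vert g(u)\right\Vert_{s+1-\rho/2}\le\left\Vert g(u)\right\Vert_{s}$. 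Invoking Lemma \ref{lem3.1} with the uniform bound $\left\Vert u(\tau)\right\Vert_{\infty}\le M$ then gives $\left\Vert g(u(\tau))\right\Vert_{s}\le C_{1}(M)\left\Vert u(\tau)\right\Vert_{s}$. Setting $\phi(t)=\left\Vert u(t)\right\Vert_{s}+\left\Vert u_{t}(t)\right\Vert_{s-1-\rho/2}$, one is left with a linear Gronwall inequality
\begin{equation*}
\phi(t)\le(A_{1}+A_{2}T)\Bigl(a+C_{1}(M)\int_{0}^{t}\phi(\tau)d\tau\Bigr),
\end{equation*}
which bounds $\phi$ on $[0,T]$ by a constant depending only on $T$, $a$, $A_{1}$, $A_{2}$ and $C_{1}(M)$. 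Letting $T\uparrow T_{\max}$ contradicts Theorem \ref{theo4.1}, so $T_{\max}=\infty$.

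I expect the main obstacle to be a bookkeeping issue rather than a conceptual one: when $s<\rho/2$ the local $H^{s}$ solution is not automatically in $H^{\rho/2}$, so some care is needed to legitimately apply Theorem \ref{theo4.2} to it. This can be resolved by observing that $K\varphi\in L^{2}$ already places $\varphi$ in $H^{\rho/2}$, so that Theorem \ref{theo3.4} can be invoked at regularity $\rho/2$; uniqueness in the common class identifies the two local solutions, after which the $L^{\infty}$ bound obtained via Theorem \ref{theo4.3} transfers to the $H^{s}$ solution and the Gronwall step above closes the argument. When $s\ge\rho/2$ the embedding $H^{s}\subset H^{\rho/2}$ makes the identity directly available and no such identification is needed.
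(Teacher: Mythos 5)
Your proposal is correct and follows essentially the same route as the paper: the energy identity together with $G(u)\ge 0$ gives the uniform $H^{\rho/2}$ and hence $L^\infty$ bound (as in Theorem \ref{theo4.3}), which feeds Lemma \ref{lem3.1} into the linear estimate of Theorem \ref{theo2.1}, and Gronwall's lemma then defeats the continuation criterion of Theorem \ref{theo4.1}. Your closing remark about identifying the $H^{s}$ and $H^{\rho/2}$ local solutions when $s<\rho/2$ addresses a point the paper leaves implicit, but it does not alter the argument.
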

\begin{proof}
     Assume that for some $T$, there exists a solution of the Cauchy problem on $[0, T)$. Then,  the estimate
     (\ref{lin-est}) yields
     \begin{equation}
     \left\Vert u(t) \right\Vert_{s}+\left\Vert u_{t}(t)\right\Vert_{s-1-{\rho\over 2}}
         \leq  A_{3}+B_{3} \int_{0}^{t}\left\Vert g(u(\tau)) \right\Vert_{s+1-{\rho\over 2}}d\tau   \label{}
    \end{equation}
    for all $t \in [0, T)$, where
    \begin{displaymath}
     A_{3}=(A_{1}+A_{2}T) \left(
            \left\Vert \varphi \right\Vert_{s}+\left\Vert \psi\right\Vert_{s-1-{\rho\over 2}}\right), ~~~~~~~B_{3}=A_{1}+A_{2}T.
    \end{displaymath}
    By the Sobolev Embedding Theorem and Theorem \ref{theo4.3}, we have
    \begin{equation}
        \left\Vert u(t) \right\Vert_{L^{\infty}} \le d \left\Vert u(t) \right\Vert_{{\rho\over 2}} \le {d\over c_{1}} \left(E(0)\right)^{1/2}.
     \end{equation}
    Using Lemma \ref{lem3.1} in this equation and noting that $\left\Vert u(\tau) \right\Vert_{s+1-{\rho\over 2}}\le  \left\Vert u(\tau) \right\Vert_{s}$
    we get
    \begin{eqnarray*}
    &&  \!\!\!\!\!\!\!\!\!\!\!
    \left\Vert u(t) \right\Vert_{s}+\left\Vert u_{t}(t)\right\Vert_{s-1-{\rho\over 2}}
                 \leq  A_{3}+B_{3}  C_{1}\left({d\over c_{1}} \left(E(0)\right)^{1/2}\right) \int_{0}^{t} \left\Vert u(\tau) \right\Vert_{s+1-{\rho\over 2}}d\tau,  \\
      &&~~~~~~~~~~~~~~~ ~~~~~ ~~~~~
      \le   A_{3}+B_{3}  C_{1}\left({d\over c_{1}} \left(E(0)\right)^{1/2}\right) \int_{0}^{t}
                        \left( \left\Vert u(\tau) \right\Vert_{s}+\left\Vert u_{t}(\tau) \right\Vert_{s-1-{\rho\over 2}} \right) d\tau.
    \end{eqnarray*}
    Applying Gronwall's Lemma to this inequality we find that
    $\left\Vert u(t) \right\Vert_{s}+\left\Vert u_{t}(t)\right\Vert_{s-1-{\rho\over 2}}$ stays bounded in $[0, t)$. This implies
    also that
    \begin{displaymath}
    \limsup_{t \rightarrow T^{-}} ~[ \left\Vert u(t) \right\Vert_{s}+\left\Vert u_{t}(t)\right\Vert_{s-1-{\rho\over 2}} ]
         < \infty.
    \end{displaymath}
    Thus, we conclude that $ T_{\max}=\infty$, i.e. there is a global solution.
\end{proof}

\setcounter{equation}{0}
\section{Blow-up in Finite Time}
\noindent
In this section we investigate finite time blow-up of solutions of the Cauchy problem given by (\ref{non-boussinesq}) and
(\ref{ini}). Our investigation relies on the following lemma, based on the idea of Levine \cite{levine}.
\begin{lemma}\label{lem5.1}
    \cite{kalantarov,levine}
    Suppose that $H(t)$, $t\ge 0$ is a positive, twice differentiable function satisfying
    $H^{\prime\prime}H-(1+\nu) \left(H^{\prime }\right)^{2}\geq 0$ where $\nu >0$. If $H(0)>0$ and $H^{\prime}(0) >0$, then
    $H(t) \rightarrow \infty $ as $t\rightarrow t_{1}$ for some  $t_{1}\leq H(0) /\left(\nu H^{\prime }(0)\right) $.
\end{lemma}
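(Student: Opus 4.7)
The plan is to reduce the nonlinear differential inequality to a concavity statement for an auxiliary function, and then exploit concavity to force blow-up in finite time. The natural choice of auxiliary function is
\[
\Phi(t) = H(t)^{-\nu},
\]
which is well defined as long as $H(t)>0$. First I would compute, using the chain rule,
\[
\Phi'(t) = -\nu H^{-\nu-1} H', \qquad \Phi''(t) = -\nu H^{-\nu-2}\bigl[H\,H'' - (1+\nu)(H')^2\bigr].
\]
The hypothesis $H'' H - (1+\nu)(H')^2 \ge 0$ then reads exactly as $\Phi''(t) \le 0$, so $\Phi$ is concave on every interval where it is defined.

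Next I would record the initial data for $\Phi$: since $H(0)>0$ and $H'(0)>0$, we get $\Phi(0)=H(0)^{-\nu}>0$ and $\Phi'(0)=-\nu H(0)^{-\nu-1}H'(0)<0$. Concavity then implies the tangent-line upper bound
\[
\Phi(t) \le \Phi(0) + \Phi'(0)\, t
\]
on the maximal interval of existence, and this linear function hits zero precisely at
\[
t_1 = -\frac{\Phi(0)}{\Phi'(0)} = \frac{H(0)^{-\nu}}{\nu H(0)^{-\nu-1}H'(0)} = \frac{H(0)}{\nu H'(0)}.
\]
Since $\Phi=H^{-\nu}$ is positive wherever $H$ is finite and positive, the only way $\Phi$ can fail to exceed the tangent bound is for $H$ to cease to be finite, i.e.\ for $H(t)\to\infty$ as $t\to t_1^-$ for some $t_1 \le H(0)/(\nu H'(0))$.

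The only real subtlety I would need to address is verifying that $H$ remains strictly positive (and hence $\Phi$ well defined and smooth) on the whole interval $[0,t_1)$, so that the concavity bound is genuinely applicable. This follows because the differential inequality gives $H''(t) \ge (1+\nu)(H'(t))^2/H(t) \ge 0$ on any subinterval where $H>0$, so $H'$ is non-decreasing there; combined with $H'(0)>0$ this yields $H'(t)\ge H'(0)>0$ and hence $H(t)\ge H(0)>0$ for all $t$ in the maximal interval on which $H$ is defined and positive. Thus $\Phi$ is concave and decreasing throughout, the tangent-line bound is valid up to blow-up time, and the conclusion follows. No step looks like a serious obstacle; the main conceptual move is just recognizing that the seemingly awkward coefficient $1+\nu$ is exactly tuned to make the transformation $H\mapsto H^{-\nu}$ produce concavity.
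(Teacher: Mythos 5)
Your proof is correct and complete: the computation $\Phi''=-\nu H^{-\nu-2}\bigl[HH''-(1+\nu)(H')^2\bigr]\le 0$ for $\Phi=H^{-\nu}$, the tangent-line bound, and the positivity check via $H''\ge 0$, $H'\ge H'(0)>0$ are all in order, and the threshold $t_1=H(0)/(\nu H'(0))$ comes out exactly right. The paper itself gives no proof of this lemma (it is quoted from Kalantarov--Ladyzhenskaya and Levine), and your argument is precisely the classical concavity argument used in those references, so there is nothing to add.
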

\begin{theorem}\label{theo5.2}
   Assume that $K\varphi \in L^{2}$, $\Lambda^{-1}\psi \in L^{2}$, $G(\varphi) \in L^{1}$.  If there is some $\nu >0$ such that
    \begin{equation}
    pg( p) \leq 2( 1+2\nu) G(p) \mbox{ for all }p\in \mathbb{R},  \label{blow-con}
    \end{equation}
    and
    \begin{displaymath}
    E(0) =\left\Vert \Lambda^{-1}\psi \right\Vert^{2}+\left\Vert K\varphi \right\Vert^{2}+2\int_{\Bbb R} G(\varphi) dx<0,
    \end{displaymath}
    then the solution $u(x,t)$ of the Cauchy problem given by (\ref{non-boussinesq}) and (\ref{ini}) blows up in finite time.
\end{theorem}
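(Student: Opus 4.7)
My plan is to apply Levine's concavity lemma (Lemma \ref{lem5.1}) to the auxiliary functional
\begin{equation*}
H(t) = \|\Lambda^{-1} u(t)\|^{2} + b(t+t_{0})^{2},
\end{equation*}
where $b,t_{0}>0$ are constants to be fixed later. By Theorem \ref{theo4.2}, the hypotheses on $\varphi,\psi$ and $G(\varphi)$ guarantee $\Lambda^{-1}u(t)\in L^{2}$ throughout the interval of existence, so $H$ is well defined and twice differentiable, and the energy identity $E(t)=E(0)$ is available at every $t$.

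First I would differentiate $H$ twice. The first derivative is immediate; for $H''(t)$ I would substitute the equation of motion $\Lambda^{-2}u_{tt}=-K^{2}u-g(u)$ from (\ref{equ-motion}) and use the self-adjointness of $\Lambda^{-1}$ and $K$ to obtain $H''(t) = 2\|\Lambda^{-1}u_{t}\|^{2} - 2\|Ku\|^{2} - 2\langle u,g(u)\rangle + 2b$. Next I would apply the structural bound $pg(p)\leq 2(1+2\nu)G(p)$ to estimate $-2\langle u,g(u)\rangle \geq -(4+8\nu)\int G(u)\,dx$, and use the energy identity to trade the remaining $\int G(u)\,dx$ for kinetic and potential terms plus the fixed constant $E(0)$. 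Selecting $b=-E(0)>0$ (legitimate precisely because $E(0)<0$) makes the constants collapse and leaves
\begin{equation*}
H''(t) \;\geq\; (4+4\nu)\bigl(\|\Lambda^{-1}u_{t}(t)\|^{2}+b\bigr).
\end{equation*}

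Next, applying Cauchy--Schwarz in $L^{2}(\mathbb{R})\oplus\mathbb{R}$ to the pairs $(\Lambda^{-1}u,\sqrt{b}(t+t_{0}))$ and $(\Lambda^{-1}u_{t},\sqrt{b})$ yields $(H'(t))^{2}\leq 4H(t)\bigl(\|\Lambda^{-1}u_{t}\|^{2}+b\bigr)$. Combined with the lower bound on $H''$ this gives $H(t)H''(t) - (1+\nu)(H'(t))^{2}\geq 0$. To invoke Lemma \ref{lem5.1} I need $H(0)>0$ (automatic from $bt_{0}^{2}>0$) and $H'(0)=2\langle\Lambda^{-1}\varphi,\Lambda^{-1}\psi\rangle+2bt_{0}>0$; the latter is arranged by taking $t_{0}$ sufficiently large. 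Lemma \ref{lem5.1} then forces $H(t)\to\infty$ at some finite $t_{1}\leq H(0)/(\nu H'(0))$, and since $b(t+t_{0})^{2}$ stays bounded on $[0,t_{1}]$, this means $\|\Lambda^{-1}u(t)\|\to\infty$. Continued existence would keep $\Lambda^{-1}u$ in $L^{2}$ by Theorem \ref{theo4.2}, so $T_{\max}\leq t_{1}<\infty$, and the continuation criterion of Theorem \ref{theo4.1} forces the $H^{s}$ norm to diverge as $t\to T_{\max}^{-}$.

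The principal delicacy in this argument is the calibration $b=-E(0)$: this is the unique value for which the constant $-(2+4\nu)E(0)$ coming from the energy identity and the additive $+2b$ from the $(t+t_{0})^{2}$ term combine into exactly $(4+4\nu)b$, matching the coefficient demanded by the Cauchy--Schwarz comparison. Any other choice of $b$ leaves a residual constant of the wrong sign and breaks the concavity inequality. The hypothesis $\nu>0$ in (\ref{blow-con}) is precisely what is needed to feed Lemma \ref{lem5.1}, and the sign assumption $E(0)<0$ is what makes $b=-E(0)$ an admissible positive constant.
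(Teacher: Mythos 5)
Your argument is correct and is essentially the paper's own proof: the same functional $H(t)=\|\Lambda^{-1}u(t)\|^{2}+b(t+t_{0})^{2}$, the same Cauchy--Schwarz bound $(H')^{2}\le 4H(\|\Lambda^{-1}u_{t}\|^{2}+b)$, the same use of (\ref{equ-motion}), (\ref{blow-con}) and the energy identity, and the same application of Lemma \ref{lem5.1}; you merely substitute the energy identity into the $H''$ bound before forming $HH''-(1+\nu)(H')^{2}$ rather than after. One small overstatement in your closing remark: $b=-E(0)$ is not the unique admissible calibration --- any $0<b\le -E(0)$ keeps the residual constant nonnegative (your own inequality $H''\ge(4+4\nu)\|\Lambda^{-1}u_{t}\|^{2}+4\nu\|Ku\|^{2}-(2+4\nu)E(0)+2b$ still dominates $(4+4\nu)(\|\Lambda^{-1}u_{t}\|^{2}+b)$ whenever $b\le -E(0)$), which is exactly the range the paper allows.
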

\begin{proof}
    Let
    \begin{displaymath}
    H(t)=  \left\Vert \Lambda^{-1}u(t) \right\Vert^{2}+b_{0}(t+t_{0})^{2}
    \end{displaymath}
    for some positive constants $b_{0}$ and $t_{0}$ to be determined later. By the energy identity it is clear that $H(t)$ is defined for all $t$ where the solution exists. Differentiation of this function leads to
    \begin{eqnarray}
    && H^{\prime}(t)= 2 \langle \Lambda^{-1}u, \Lambda^{-1}u_{t} \rangle+2b_{0}(t+t_{0}),   \label{first-der}\\
    && H^{\prime\prime}(t) =2 \left\Vert \Lambda^{-1}u_{t}\right\Vert^{2}
                                +2 \langle \Lambda^{-1}u, \Lambda^{-1}u_{tt} \rangle +2b_{0}. \label{second-der}
    \end{eqnarray}
    An application of the Cauchy-Schwarz inequality and $2ab \le a^{2}+b^{2}$ yields
    and
    \begin{eqnarray}
       [H^{\prime}(t)]^{2} & = & 4 \left[ \langle \Lambda^{-1}u, \Lambda^{-1}u_{t} \rangle+b_{0}(t+t_{0})\right]^{2} \nonumber \\
                & \le & 4 \left[ \left\Vert \Lambda^{-1}u\right\Vert \left\Vert \Lambda^{-1}u_{t}\right\Vert
                        +b_{0}(t+t_{0})\right]^{2} \nonumber \\
                & = & 4 \left[ \left\Vert \Lambda^{-1}u\right\Vert^{2} \left\Vert \Lambda^{-1}u_{t}\right\Vert^{2}
                        +b_{0}^{2}(t+t_{0})^{2}+2b_{0}(t+t_{0})
                        \left\Vert \Lambda^{-1}u\right\Vert \left\Vert \Lambda^{-1}u_{t}\right\Vert \right] \nonumber   \\
                & \le & 4 \left[ \left\Vert \Lambda^{-1}u\right\Vert^{2} \left\Vert \Lambda^{-1}u_{t}\right\Vert^{2}
                        +b_{0}^{2}(t+t_{0})^{2}+b_{0}\left\Vert \Lambda^{-1}u\right\Vert^{2}
                        +b_{0}\left\Vert \Lambda^{-1}u_{t}\right\Vert^{2}(t+t_{0})^{2}\right] \nonumber \\
                & = & 4 H(t) \left[ \left\Vert \Lambda^{-1}u_{t}\right\Vert^{2}+b_{0} \right]. \label{first-inequality}
    \end{eqnarray}
    Moreover, using (\ref{equ-motion}) in (\ref{second-der}) and then recalling the definition of $H(t)$  we get
    \begin{eqnarray*}
       H^{\prime\prime}(t) & = &  2 \left\Vert \Lambda^{-1}u_{t}\right\Vert^{2}
                                +2 \langle u, \Lambda^{-2}u_{tt} \rangle +2b_{0} \nonumber \\
                & = & 2 \left\Vert \Lambda^{-1}u_{t}\right\Vert^{2}
                                +2 \langle u, -K^{2}u-g(u) \rangle +2b_{0} \nonumber \\
                & = & 2 \left\Vert \Lambda^{-1}u_{t}\right\Vert^{2}
                                +2 \langle Ku, -Ku\rangle +2 \langle u,-g(u) \rangle +2b_{0} \nonumber \\
                & = & 2 \left\Vert \Lambda^{-1}u_{t}\right\Vert^{2}
                                -2 \left\Vert Ku\right\Vert^{2} -2 \int_{\mathbb{R}}u g(u)dx+2b_{0}.
    \end{eqnarray*}
    Combining this inequality with (\ref{blow-con}) gives
    \begin{equation}
        H^{\prime\prime}(t) \ge 2 \left\Vert \Lambda^{-1}u_{t}\right\Vert^{2}-2 \left\Vert Ku\right\Vert^{2}
            -4(1+2\nu)\int_{\mathbb{R}}G(u)dx+2b_{0} \label{second-inequality}
    \end{equation}
    Then, using (\ref{first-inequality}), (\ref{second-inequality}), (\ref{energy}) and recalling that $\nu>0$ we obtain
    \begin{eqnarray*}
     && \!\!\!\!\!\!\!\!\!\!\! H(t)H^{\prime\prime}(t)-(1+\nu)[H^{\prime}(t)]^{2} \nonumber \\
     && ~~~~~~~~~\ge  H(t)\left[H^{\prime\prime}(t)-4(1+\nu) \left( \left\Vert \Lambda^{-1}u_{t}\right\Vert^{2}+b_{0}\right) \right] \\
     && ~~~~~~~~~\ge  -2H(t)\left[(1+2\nu)\left( \left\Vert \Lambda^{-1}u_{t}\right\Vert^{2}
                        +2\int_{\mathbb{R}}G(u)dx+b_{0}  \right) + \left\Vert Ku\right\Vert^{2} \right] \\
     && ~~~~~~~~~\ge  -2(1+2\nu)H(t)\left( \left\Vert \Lambda^{-1}u_{t}\right\Vert^{2}+\left\Vert Ku\right\Vert^{2}
                        +2\int_{\mathbb{R}}G(u)dx+b_{0}  \right) \\
     && ~~~~~~~~~ = -2(1+2\nu)H(t) (E(0)+b_{0}).
    \end{eqnarray*}
    If we choose $b_{0}$ so that $0 \le b_{0} \le -E(0)$, then we obtain $H(t)H^{\prime\prime}(t)-(1+\nu)[H^{\prime}(t)]^{2}\ge 0$ and
    $H(0)>0$. Also, we choose $t_{0}$ sufficiently large so that $2\langle \Lambda^{-1}\varphi, \Lambda^{-1}\psi \rangle+2b_{0}t_{0}>0$,
    i.e.   $H^{\prime}(0)>0$. Lemma \ref{lem5.1} now allows us to conclude that $H(t)$ and thus the energy  blow up in finite time.
\end{proof}

\setcounter{equation}{0}
\section{A General Class of Double Dispersive Wave Equations}
\noindent
In this section, we return to our original motivation mentioned in the Introduction and consider the Cauchy problem
(\ref{non-equ})-(\ref{ini}). Below we mainly extend the results of the previous sections to the Cauchy problem
(\ref{non-equ})-(\ref{ini}). We will very briefly sketch the main ideas in the proofs leading to local-existence,
global existence and blow-up theorems since the analysis is similar in spirit to that of the previous sections.
The main point is to take into account the contribution of the additional term involving  $B$
in (\ref{non-equ}). Once again,  we remind the reader that $L$ and $B$ appearing in (\ref{non-equ}) are pseudodifferential
operators of order $\rho$ and $-r$, respectively.

We start with the local existence result for (\ref{non-equ})-(\ref{ini}).
\begin{theorem}\label{theo6.1}
     Assume that ${\rho\over 2}+ r \ge 1$, $s> {1\over 2}$, $\varphi \in H^{s}$, $\psi\in H^{s-1-{\rho\over 2}}$ and $g\in C^{[s]+1}(\mathbb{R})$.
     Then there is some $T>0$ such that the Cauchy problem (\ref{non-equ})-(\ref{ini}) has a unique  solution
    $u\in C\left( [0,T],H^{s}\right)\cap C^{1}\left( [0,T],H^{s-1-{\rho\over 2}}\right)$.
\end{theorem}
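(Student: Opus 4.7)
The plan is to mimic the contraction-mapping argument of Section 3, the only change being that the nonlinear source $(g(u))_{xx}$ is replaced by $(Bg(u))_{xx}$, and the smoothing provided by $B$ absorbs the weaker hypothesis $\tfrac{\rho}{2}+r \ge 1$ in place of $\rho \ge 2$. Working in the same Banach spaces $X(T)$ and $Y(T)$ defined in Section 3, I would fix $w\in Y(T)$, set $h = B g(w)$, and apply Theorem~\ref{theo2.1} to the linearized problem $u_{tt}-Lu_{xx}=(h)_{xx}$ with initial data $(\varphi,\psi)$. The map $\mathcal{S}:w\mapsto u$ is well-defined into $X(T)$ as soon as we check $h\in L^1([0,T],H^{s+1-\rho/2})$, and the only new ingredient beyond Section 3 is an $H^{s+1-\rho/2}$ estimate on $B g(w)$.

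The key estimate is the following. Since $B$ has order $-r$, the bound (\ref{con-B}) on its symbol gives, for any $v\in L^2$ and any real $\sigma$,
\begin{equation*}
\|Bv\|_{\sigma}^2 = \int (1+\xi^2)^{\sigma} b(\xi)^2 |\widehat v(\xi)|^2\,d\xi \le c_3^4 \int (1+\xi^2)^{\sigma-r}|\widehat v(\xi)|^2\,d\xi = c_3^4 \|v\|_{\sigma-r}^2.
\end{equation*}
Taking $\sigma = s+1-\tfrac{\rho}{2}$, and using the hypothesis $\tfrac{\rho}{2}+r \ge 1$, one has $\sigma - r = s+1-\tfrac{\rho}{2}-r \le s$, so that $\|Bv\|_{s+1-\rho/2} \le c_3^2\|v\|_{s}$. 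Applying this with $v = g(w(\tau))$, Lemma~\ref{lem3.1} yields $\|Bg(w(\tau))\|_{s+1-\rho/2} \le c_3^2 C_1(dA)\|w(\tau)\|_s$, exactly the analogue of inequality (\ref{non-inequality}) used in Lemma~\ref{lem3.3}.

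With this replacement, the remainder of the argument is identical to Lemma~\ref{lem3.3}: the estimate (\ref{s-inequality}) becomes
\begin{equation*}
\|\mathcal{S}(w)\|_{X(T)} \le (A_1+A_2 T)\bigl(a + T\, c_3^2 C_1(dA) A\bigr),
\end{equation*}
where $a=\|\varphi\|_s+\|\psi\|_{s-1-\rho/2}$; choosing $A=(A_1+1)a$ and $T$ small gives $\mathcal{S}(Y(T))\subset Y(T)$. For the contraction step, Lemma~\ref{lem3.2} combined with the same smoothing estimate yields
\begin{equation*}
\|B\bigl(g(w)-g(\tilde w)\bigr)\|_{s+1-\rho/2} \le c_3^2 C_2(dA)\|w-\tilde w\|_s,
\end{equation*}
from which $\|\mathcal{S}(w)-\mathcal{S}(\tilde w)\|_{X(T)}\le \tfrac12\|w-\tilde w\|_{X(T)}$ for $T$ sufficiently small. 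The Banach Fixed Point Theorem then provides the unique local solution.

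The main (and really the only) obstacle is bookkeeping the Sobolev indices so that the smoothing from $B$ exactly compensates the loss of the assumption $\rho\ge 2$; once the inequality $s+1-\tfrac{\rho}{2}-r \le s$ is in hand, every other step transfers verbatim from Section 3, so no new difficulty arises. The hypotheses on $g$, $\varphi$, $\psi$ are chosen to match precisely those needed in Lemmas~\ref{lem3.1} and \ref{lem3.2} after this index shift.
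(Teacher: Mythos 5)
Your proposal is correct and follows essentially the same route as the paper: both reduce the problem to the contraction argument of Section 3 by establishing the smoothing estimate $\left\Vert Bv \right\Vert_{s+1-\rho/2} \le c_3^2 \left\Vert v\right\Vert_{s+1-\rho/2-r} \le c_3^2\left\Vert v\right\Vert_s$ from (\ref{con-B}) and the hypothesis ${\rho\over 2}+r\ge 1$, and then invoke Lemmas \ref{lem3.1} and \ref{lem3.2} exactly as in Lemma \ref{lem3.3}. No discrepancies to report.
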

\begin{proof}
    As in Lemma \ref{lem3.3} and Theorem \ref{theo3.4}, the proof relies on the fixed point argument. Let $u$ be  the unique
    solution of the linearized problem
    \begin{eqnarray}
    && u_{tt}-Lu_{xx} =B\left(g(w)\right)_{xx},\quad x\in \mathbb{R},\quad t>0, \label{lin-equ-b}\\
    && u(x,0) =\varphi (x),\quad u_{t}(x,0)=\psi (x). \label{lin-ini-b}
    \end{eqnarray}
    Once again, the corresponding map will be denoted by $\cal S$, i.e. $u = {\cal S} (w)$. It follows from the estimate
    (\ref{con-B}) on the symbol $b$ of the operator $B$ that
    \begin{displaymath}
    \left\Vert Bv \right\Vert_{s} \le c_{3}^{2}\left\Vert v \right\Vert_{s-r}.
    \end{displaymath}
    Thus, using this inequality and Lemma \ref{lem3.1}  gives
    \begin{displaymath}
    \left\Vert Bg(w) \right\Vert_{s+1-{\rho\over 2}} \le c_{3}^{2}\left\Vert g(w) \right\Vert_{s+1-{\rho\over 2}-r}
                    \le c_{3}^{2} \left\Vert g(w) \right\Vert_{s} \le c_{3}^{2} C_{1}(M)\left\Vert w \right\Vert_{s}.
    \end{displaymath}
    Then the basic estimate for the above problem takes the following form
    \begin{displaymath}
     \left\Vert u(t) \right\Vert_{s}+\left\Vert u_{t}(t)\right\Vert_{s-1-{\rho\over 2}}
         \leq  (A_{1}+A_{2}T) \left(
            \left\Vert \varphi \right\Vert_{s}+\left\Vert \psi\right\Vert_{s-1-{\rho\over 2}}
            + c_{3}^{2}C_{1}(M)\int_{0}^{t}\left\Vert w \right\Vert_{s}d\tau \right).
    \end{displaymath}
    With this estimate, the rest of the proof follows exactly the same lines as those of Lemma \ref{lem3.3} and Theorem \ref{theo3.4}.
\end{proof}
We note that the key distinction between Theorem \ref{theo3.4} and Theorem \ref{theo6.1} is the condition
${\rho\over 2} + r \ge 1$ which is needed to overcome the two derivatives in the nonlinear term $g(u)_{xx}$.
We observe that, when the operator $B$ is simply the identity operator, this condition reduces to the one given in
Theorem \ref{theo3.4} where $b = 0$, thus $r=0$; so $\rho \ge 2$.
\begin{theorem}\label{theo6.2}
    Suppose that $u(x,t)$ is a solution of the Cauchy problem (\ref{non-equ})-(\ref{ini}) on some interval $[0, T)$.  Let $K=L^{1/2}$, $G(u) =\int_{0}^{u}g(p)dp$, $\Lambda^{-\alpha}w={\cal F}^{-1}[ |\xi|^{-\alpha}{\cal F}w]$ and $B^{-1/2}w={\cal F}^{-1}[ \left(b(\xi)\right)^{-1/2}{\cal F}w]$ where ${\cal F}$ and ${\cal F}^{-1}$ denote Fourier transform and inverse Fourier transform   in the $x$-variable, respectively. If $B^{-1/2}\Lambda^{-1}\psi \in L^{2}$, $B^{-1/2}K\varphi \in L^{2}$ and $G(\varphi) \in L^{1}$, then, for any $t\in [0, T)$, the energy identity
     \begin{equation}
     E(t)=\left\Vert B^{-1/2}\Lambda^{-1}u_{t} \right\Vert^{2}+\left\Vert B^{-1/2}Ku \right\Vert^{2}+2\int_{\mathbb{R}}G(u)dx=E(0)
        \label{dd-energy}
    \end{equation}
    is satisfied.
\end{theorem}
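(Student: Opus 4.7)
The plan is to adapt the proof of Theorem \ref{theo4.2} to accommodate the additional smoothing operator $B$. First I would rewrite (\ref{non-equ}) by applying the formal inverse $B^{-1}\Lambda^{-2}$ (using that $\partial_x^2 = -\Lambda^2$), putting the equation in the form
\begin{equation*}
B^{-1}\Lambda^{-2}u_{tt} + B^{-1}K^{2}u + g(u) = 0,
\end{equation*}
which is the natural analogue of (\ref{equ-motion}) in the presence of $B$.

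Next I would pair this identity with $u_t$ in $L^{2}$ and exploit the fact that $B^{-1/2}$, $\Lambda^{-1}$ and $K$ are self-adjoint commuting Fourier multipliers, since their symbols $(b(\xi))^{-1/2}$, $|\xi|^{-1}$ and $k(\xi)$ are real and positive. Using $B^{-1}\Lambda^{-2} = (B^{-1/2}\Lambda^{-1})^{2}$ and $B^{-1}K^{2} = (B^{-1/2}K)^{2}$ and shifting one copy of each factor onto $u_t$, the three resulting terms are recognized as $\tfrac{1}{2}\frac{d}{dt}\|B^{-1/2}\Lambda^{-1}u_t\|^{2}$, $\tfrac{1}{2}\frac{d}{dt}\|B^{-1/2}Ku\|^{2}$ and $\frac{d}{dt}\int G(u)\,dx$ (the last by the chain rule, legitimate because $u(t)\in H^{s}\subset L^{\infty}$ for $s>1/2$). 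Integrating in time from $0$ to $t$ then yields (\ref{dd-energy}).

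The main obstacle, exactly as in Theorem \ref{theo4.2}, is to justify these formal manipulations rigorously. Since $b(\xi)$ need not be bounded below, the operator $B^{-1/2}$ is unbounded and the pairings above need not a priori make sense. The hypotheses $B^{-1/2}\Lambda^{-1}\psi\in L^{2}$ and $B^{-1/2}K\varphi\in L^{2}$ give finiteness of the energy at $t=0$; the task is to propagate this to positive times so that $B^{-1/2}\Lambda^{-1}u_t(t)$ and $B^{-1/2}Ku(t)$ lie in $L^{2}$ throughout $[0,T)$. I would handle this by approximation: smooth the initial data by Schwartz functions $\varphi_n,\psi_n$ (chosen so that $B^{-1/2}\Lambda^{-1}\psi_n\to B^{-1/2}\Lambda^{-1}\psi$ and $B^{-1/2}K\varphi_n\to B^{-1/2}K\varphi$ in $L^{2}$), carry out the calculation on the corresponding smooth solutions $u_n$ delivered by Theorem \ref{theo6.1}, and pass to the limit using the linear estimate (\ref{lin-est}) for continuous dependence on initial data together with Lemma \ref{lem3.2} for the nonlinear term. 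Alternatively, one can mimic the Fourier-side argument of Lemma 4.1 of \cite{duruk1}, already invoked in the proof of Theorem \ref{theo4.2}, which directly propagates the requisite $L^{2}$ regularity of $B^{-1/2}\Lambda^{-1}u$ in time.
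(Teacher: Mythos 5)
Your proposal follows essentially the same route as the paper: rewriting (\ref{non-equ}) as $B^{-1}\Lambda^{-2}u_{tt}+B^{-1}K^{2}u+g(u)=0$, pairing with $u_t$, exploiting the self-adjointness of $B^{-1/2}$, $\Lambda^{-1}$ and $K$ to recognize exact time derivatives, and then reducing the rigorous justification to the argument of Theorem \ref{theo4.2} and Lemma 4.1 of \cite{duruk1}. Your treatment of the approximation step is in fact more explicit than the paper's, which simply asserts that the proof follows that of Theorem \ref{theo4.2}.
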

\begin{proof}
    The most important step in the proof is to rewrite (\ref{non-equ}) as
    \begin{displaymath}
    B^{-1}\Lambda^{-2}u_{tt}+B^{-1}K^{2}u+g(u)=0.
    \end{displaymath}
    Note that, when $B^{-1/2}\Lambda^{-1}$  and $B^{-1/2}K$ are replaced by $\Lambda^{-1}$  and $K$, respectively, this equation reduces (\ref{equ-motion}).  Noting this fact and recalling that  $B^{-1/2}$, $\Lambda^{-1}$  and $K$ are self-adjoint, we conclude that, not surprisingly, the proof follows
    that of Theorem \ref{theo4.2}.
\end{proof}

Once again, we give the global existence and uniqueness theorem  for two different regimes, i.e. for
$s={r\over 2}+{\rho\over 2}$ and a general $s$.
\begin{theorem}\label{theo6.3}
     Assume that $r+{\rho\over 2} \ge 1$, $s={r\over 2}+{\rho\over 2}$,
    $g\in C^{[{r\over 2}+{\rho\over 2}]+1}(\mathbb{R})$,
    $\varphi \in H^{{r\over 2}+{\rho\over 2}}$,
    $\psi\in H^{{r\over 2}-1}$, $\Lambda^{-1}\psi \in L^{2}$, $G(\varphi) \in L^{1}$ and $G(u) \ge 0$ for all $u\in \mathbb{R}$.
    Then the Cauchy problem (\ref{non-equ})-(\ref{ini}) has a unique global solution
    $u\in C\left( [0,\infty),H^{{r\over 2}+{\rho\over 2}}\right)\cap C^{1}\left( [0,\infty),H^{{r\over 2}-1}\right)$.
\end{theorem}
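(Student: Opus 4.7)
The plan is to mimic the proof of Theorem \ref{theo4.3}, upgrading its a priori estimate to account for the weight $B^{-1/2}$ appearing in the energy identity (\ref{dd-energy}). The overall strategy is a continuation argument: by combining the local existence result of Theorem \ref{theo6.1} with a patching procedure parallel to Theorem \ref{theo4.1}, one obtains a blow-up alternative stating that the solution extends globally unless $\|u(t)\|_{s}+\|u_{t}(t)\|_{s-1-\rho/2}$ diverges as $t\to T_{\max}^{-}$. With the scale choice $s=r/2+\rho/2$, the task therefore reduces to bounding $\|u(t)\|_{r/2+\rho/2}+\|u_{t}(t)\|_{r/2-1}$ uniformly on any existence interval $[0,T)$.

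Assuming the initial data satisfy the hypotheses of Theorem \ref{theo6.2} at $t=0$ (in particular that $B^{-1/2}\Lambda^{-1}\psi$ and $B^{-1/2}K\varphi$ lie in $L^{2}$, which is natural in the $B=(I+M)^{-1}$ setting of the introduction), the energy identity together with the assumption $G(u)\ge 0$ immediately yields
\begin{displaymath}
\|B^{-1/2}\Lambda^{-1}u_{t}(t)\|^{2}+\|B^{-1/2}Ku(t)\|^{2}\le E(0),\qquad t\in[0,T).
\end{displaymath}

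The next step is to convert these weighted quantities into standard Sobolev norms via Plancherel. The lower bound $b(\xi)^{-1}\ge c_{3}^{-2}(1+\xi^{2})^{r/2}$ coming from (\ref{con-B}) combined with $k(\xi)^{2}\ge c_{1}^{2}(1+\xi^{2})^{\rho/2}$ from (\ref{con-K}) gives the spatial estimate
\begin{displaymath}
\|u(t)\|_{r/2+\rho/2}^{2}\le \frac{c_{3}^{2}}{c_{1}^{2}}\,\|B^{-1/2}Ku(t)\|^{2}.
\end{displaymath}
For the time derivative, the elementary pointwise bound $(1+\xi^{2})^{r/2-1}\le (1+\xi^{2})^{r/2}\xi^{-2}$, which holds because $\xi^{2}/(1+\xi^{2})\le 1$, combined again with $b(\xi)^{-1}\xi^{-2}\ge c_{3}^{-2}(1+\xi^{2})^{r/2}\xi^{-2}$, yields
\begin{displaymath}
\|u_{t}(t)\|_{r/2-1}^{2}\le c_{3}^{2}\,\|B^{-1/2}\Lambda^{-1}u_{t}(t)\|^{2}.
\end{displaymath}

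Summing the two bounds produces a constant depending only on $E(0)$, $c_{1}$, $c_{3}$ that dominates $\|u(t)\|_{r/2+\rho/2}+\|u_{t}(t)\|_{r/2-1}$ on $[0,T)$. The blow-up alternative then forces $T_{\max}=\infty$, giving the desired global solution in $C([0,\infty),H^{r/2+\rho/2})\cap C^{1}([0,\infty),H^{r/2-1})$. The most delicate step I anticipate is reconciling the initial-data hypotheses in the statement with those required by Theorem \ref{theo6.2}: the energy identity there demands $B^{-1/2}\Lambda^{-1}\psi\in L^{2}$, whereas only $\Lambda^{-1}\psi\in L^{2}$ is listed above. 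Filling this gap will require either an additional coercivity assumption on $b(\xi)$ from below, or exploiting the implicit $B=(I+M)^{-1}$ structure so that the two conditions coincide.
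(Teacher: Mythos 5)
Your proposal is correct and follows essentially the same route as the paper: the energy identity (\ref{dd-energy}) with $G(u)\ge 0$ gives $\left\Vert B^{-1/2}\Lambda^{-1}u_{t}\right\Vert^{2}+\left\Vert B^{-1/2}Ku\right\Vert^{2}\le E(0)$, the symbol bounds (\ref{con-L})--(\ref{con-B}) convert these into uniform control of $\left\Vert u(t)\right\Vert_{r/2+\rho/2}+\left\Vert u_{t}(t)\right\Vert_{r/2-1}$, and the continuation alternative then forces $T_{\max}=\infty$. The hypothesis mismatch you flag ($\Lambda^{-1}\psi\in L^{2}$ versus the $B^{-1/2}\Lambda^{-1}\psi\in L^{2}$, $B^{-1/2}K\varphi\in L^{2}$ needed for Theorem \ref{theo6.2}) is a genuine imprecision in the paper's statement that its proof also passes over silently, so your reading of what must implicitly be assumed is the right one.
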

\begin{proof}
    The proof is entirely similar to that of Theorem \ref{theo4.3}. Assume that the solution exists for the times $[0, T)$. By using the condition $G(u)\ge 0$ in (\ref{dd-energy}), one obtains
    \begin{equation}
        \left\Vert B^{-1/2}\Lambda^{-1}u_{t} \right\Vert^{2}+\left\Vert B^{-1/2} Ku \right\Vert^{2} \le E(0). \label{dd-energy-inequality}
     \end{equation}
     Furthermore, note that
     \begin{eqnarray}
      \left\Vert u_{t}(t) \right\Vert^{2}_{{r\over 2}-1}
                &=&   \int (1+\xi^{2})^{{r\over 2}-1}|\widehat{u}_{t}|^{2}d\xi  \le \int {{(1+\xi^{2})^{{r\over 2}}}\over {\xi^{2}}}|\widehat{u}_{t}|^{2}d\xi\nonumber \\
                & \le & c_{3}^{2} \int {b^{-1}(\xi)\over {\xi^{2}}}|\widehat{u}_{t}|^{2}d\xi
                        = c_{3}^{2}  \left\Vert B^{-1/2}\Lambda^{-1}u_{t} \right\Vert^{2} \le c_{3}^{2}  E(0)
                        \label{dd-est-aa}
    \end{eqnarray}
    where we have used (\ref{con-B}) and (\ref{dd-energy-inequality}). Also note that combining (\ref{con-L}) and (\ref{con-B}) leads to
     \begin{equation}
        (1+\xi^{2})^{{r\over 2}+{\rho\over 2}} \le c_{1}^{-2}c_{3}^{2}b^{-1}(\xi)k^{2}(\xi).
     \end{equation}
     Using this inequality and (\ref{dd-energy-inequality}) we obtain
    \begin{eqnarray}
      \left\Vert u(t) \right\Vert^{2}_{{r\over 2}+{\rho\over 2}}
                &=&  \int (1+\xi^{2})^{{r\over 2}+{\rho\over 2}}|\widehat{u}(\xi)|^{2}d\xi \nonumber \\
                & \le &  {{c_{3}^{2}}\over c_{1}^{2}}\int b^{-1}(\xi)k^{2}(\xi)|\widehat{u}(\xi)|^{2}d\xi
                 =  {c_{3}^{2} \over c_{1}^{2}} \left\Vert B^{-1/2}Ku \right\Vert^{2} \le {c_{3}^{2} \over c_{1}^{2}}  E(0).
                        \label{dd-est-bb}
    \end{eqnarray}
    We then combine the two estimates,  (\ref{dd-est-aa}) and (\ref{dd-est-bb}), to get
    \begin{displaymath}
    \limsup_{t \rightarrow T^{-}}~ \left[ \left\Vert u(t) \right\Vert_{{r\over 2}+{\rho\over 2}}
        +\left\Vert u_{t}(t)\right\Vert_{{r\over 2}-1}\right]
            \le \left(c_{3}+{c_{3}\over c_{1}}\right) \left(E(0)\right)^{1/2} < \infty .
    \end{displaymath}
    With an argument similar to that in the proof of Theorem \ref{theo4.3} we conclude that $T_{\max}=\infty$ and that we have the global solution
    $u(x,t)\in C\left( [0,\infty),H^{{r\over 2}+{\rho\over 2}}\right)\cap C^{1}\left( [0,\infty),H^{{r\over 2}-1}\right)$.
\end{proof}
\begin{theorem}\label{theo6.4}
     Assume that $r+{\rho\over 2} \ge 1$, ${r\over 2}+{\rho\over 2}>1/2$, $s>1/2$, $g\in C^{[s]+1}(\mathbb{R})$, $\varphi \in H^{s}$,
    $\psi\in H^{s-1-{\rho\over 2}}$, $G(\varphi) \in L^{1}$ and $G(u) \ge 0$ for all $u\in \mathbb{R}$. Then
    the Cauchy problem (\ref{non-equ})-(\ref{ini}) has a unique global solution
    $u\in C\left( [0,\infty),H^{s}\right)\cap C^{1}\left( [0,\infty),H^{s-1-{\rho\over 2}}\right)$.
\end{theorem}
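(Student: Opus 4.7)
The plan is to run the same bootstrap as in Theorem \ref{theo4.4}, with the linear estimate of Theorem \ref{theo2.1} replaced by its $B$-modified version from the proof of Theorem \ref{theo6.1}, and with the uniform $L^{\infty}$ control on $u$ now supplied by Theorem \ref{theo6.3} instead of Theorem \ref{theo4.3}. Concretely, by Theorem \ref{theo6.1} the Cauchy problem has a unique local solution on some maximal interval $[0,T_{\max})$, and the patching argument of Theorem \ref{theo4.1} carries over verbatim to give the blow-up alternative: if $T_{\max}<\infty$, then $\limsup_{t\to T_{\max}^{-}}[\|u(t)\|_{s}+\|u_{t}(t)\|_{s-1-\rho/2}]=\infty$. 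Thus it suffices to prove an a priori bound on this quantity on every finite interval $[0,T)\subset[0,T_{\max})$.

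The first step is to produce a uniform $L^{\infty}$ bound on $u$. Since $\varphi\in H^{s}$ and $\psi\in H^{s-1-\rho/2}$ together with the extra hypotheses inherited from Theorem \ref{theo6.2} give initial data admissible for Theorem \ref{theo6.3}, by uniqueness of the local solution the restriction of the global energy-level solution coincides with $u$ on $[0,T_{\max})$, hence
\[
\|u(t)\|_{{r\over 2}+{\rho\over 2}}\leq \Bigl(c_{3}+{c_{3}\over c_{1}}\Bigr)E(0)^{1/2}
\]
for all $t\in[0,T_{\max})$. The assumption ${r\over 2}+{\rho\over 2}>{1\over 2}$ is exactly what is needed to invoke the Sobolev embedding $H^{{r\over 2}+{\rho\over 2}}\hookrightarrow L^{\infty}$, producing a constant $M$ with $\|u(t)\|_{\infty}\leq M$ uniformly in $t$.

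The second step mirrors the proof of Theorem \ref{theo6.1}. Applying Theorem \ref{theo2.1} to the linear equation with forcing $Bg(u)$, and using the mapping estimate $\|Bv\|_{s+1-{\rho\over 2}}\leq c_{3}^{2}\|v\|_{s+1-{\rho\over 2}-r}$ together with the hypothesis ${\rho\over 2}+r\geq 1$ (which yields $s+1-{\rho\over 2}-r\leq s$), I obtain
\[
\|u(t)\|_{s}+\|u_{t}(t)\|_{s-1-{\rho\over 2}}\leq A_{3}+B_{3}c_{3}^{2}\int_{0}^{t}\|g(u(\tau))\|_{s}\,d\tau.
\]
Lemma \ref{lem3.1}, applied with the uniform bound $M$ just obtained, gives $\|g(u(\tau))\|_{s}\leq C_{1}(M)\|u(\tau)\|_{s}$, and hence
\[
\|u(t)\|_{s}+\|u_{t}(t)\|_{s-1-{\rho\over 2}}\leq A_{3}+B_{3}c_{3}^{2}C_{1}(M)\int_{0}^{t}\bigl(\|u(\tau)\|_{s}+\|u_{t}(\tau)\|_{s-1-{\rho\over 2}}\bigr)d\tau.
\]
Gronwall's lemma then produces an exponential-in-$T$ bound that is finite for every finite $T$.

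Combined with the blow-up alternative this forces $T_{\max}=\infty$, which gives the asserted global solution in $C([0,\infty),H^{s})\cap C^{1}([0,\infty),H^{s-1-{\rho\over 2}})$. The main obstacle I foresee is the first step: although the hypotheses closely parallel those of Theorem \ref{theo4.4}, one must be careful that the initial data, possibly of regularity lower than ${r\over 2}+{\rho\over 2}$, still legitimately feed Theorem \ref{theo6.3}; this is ensured by the implicit assumptions needed for the energy identity (Theorem \ref{theo6.2}) to make sense, so that the uniqueness-plus-restriction argument correctly identifies the local $H^{s}$ solution with the global energy-level solution and transports its $L^{\infty}$ bound.
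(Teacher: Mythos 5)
Your proposal is correct and follows essentially the same route as the paper: the uniform $L^{\infty}$ bound from the energy estimate (\ref{dd-est-bb}) via the Sobolev embedding $H^{{r\over 2}+{\rho\over 2}}\subset L^{\infty}$, then the $B$-modified linear estimate, Lemma \ref{lem3.1} and Gronwall, and finally the continuation criterion. Your closing caveat is well taken — the theorem as stated omits the hypotheses $B^{-1/2}K\varphi\in L^{2}$ and $B^{-1/2}\Lambda^{-1}\psi\in L^{2}$ needed for the energy identity of Theorem \ref{theo6.2}, and the paper's own proof uses them implicitly exactly as you do.
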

\begin{proof}
    Since  ${r\over 2}+{\rho\over 2}>1/2$, by the Sobolev Embedding Theorem we have $H^{{r\over 2}+{\rho\over 2}}\subset L^{\infty}$.
    Using this fact and (\ref{dd-est-bb}) yields
    \begin{equation}
        \left\Vert u(t) \right\Vert_{L^{\infty}} \le d \left\Vert u(t) \right\Vert_{{r\over 2}+{\rho\over 2}}
        \le d {c_{3}\over c_{1}}\left(E(0)\right)^{1/2}. \label{rest}
     \end{equation}
     Following similar steps as in the proof of Theorem \ref{theo4.4} and using (\ref{rest}) we obtain
    \begin{displaymath}
       \left\Vert u(t) \right\Vert_{s}+\left\Vert u_{t}(t)\right\Vert_{s-1-{\rho\over 2}}
                 \le   A_{3}+B_{3}  C_{1}\left(d {c_{3}\over c_{1}}\left(E(0)\right)^{1/2}\right) \int_{0}^{t}
                        \left( \left\Vert u(\tau) \right\Vert_{s}+\left\Vert u_{t}(\tau) \right\Vert_{s-1-{\rho\over 2}} \right) d\tau
    \end{displaymath}
    where the only difference with respect to the corresponding estimate of Theorem \ref{theo4.4} is the constant  $c_{3}$. Using
    Gronwall's Lemma and repeating the argument in Theorem \ref{theo4.4} we conclude that $ T_{\max}=\infty$, i.e. there is a global solution.
\end{proof}

We now investigate finite time blow-up of solutions of the Cauchy problem (\ref{non-equ})-(\ref{ini}). Once again, our investigation relies on Lemma \ref{lem5.1}.
\begin{theorem}\label{theo6.5}
    Assume that $B^{-1/2}K\varphi \in L^{2}$, $B^{-1/2}\Lambda^{-1}\psi \in L^{2}$, $G(\varphi) \in L^{1}$.  If there is some $\nu >0$ such that
    \begin{equation}
    pg( p) \leq 2( 1+2\nu) G(p) \mbox{ for all }p\in \mathbb{R},  \label{dd-blow-con}
    \end{equation}
    and
    \begin{displaymath}
    E(0) =\left\Vert B^{-1/2}\Lambda^{-1}\psi \right\Vert^{2}+\left\Vert B^{-1/2}K\varphi \right\Vert^{2}
    +2\int_{\Bbb R} G(\varphi) dx<0,
    \end{displaymath}
    then the solution $u(x,t)$ of the Cauchy problem (\ref{non-equ})-(\ref{ini}) blows up in finite time.
\end{theorem}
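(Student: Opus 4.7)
The plan is to adapt the concavity argument of Theorem \ref{theo5.2} by building the operator $B^{-1/2}$ into the relevant norms so that the energy identity of Theorem \ref{theo6.2} can be plugged directly into Lemma \ref{lem5.1}. Concretely, I would define
\begin{displaymath}
    H(t)=\left\Vert B^{-1/2}\Lambda^{-1}u(t)\right\Vert^{2}+b_{0}(t+t_{0})^{2},
\end{displaymath}
with $b_{0},t_{0}>0$ to be chosen. Well-definedness on the maximal existence interval follows from the energy identity (\ref{dd-energy}) together with the hypothesis $B^{-1/2}\Lambda^{-1}\psi,\;B^{-1/2}K\varphi\in L^{2}$, exactly as in Theorem \ref{theo5.2}.

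Next I would compute $H^{\prime}$ and $H^{\prime\prime}$. The first derivative gives $H^{\prime}(t)=2\langle B^{-1/2}\Lambda^{-1}u,B^{-1/2}\Lambda^{-1}u_{t}\rangle+2b_{0}(t+t_{0})$, and Cauchy--Schwarz combined with $2ab\le a^{2}+b^{2}$ yields $[H^{\prime}(t)]^{2}\le 4H(t)\bigl(\Vert B^{-1/2}\Lambda^{-1}u_{t}\Vert^{2}+b_{0}\bigr)$, verbatim as in (\ref{first-inequality}). For $H^{\prime\prime}$ I would use the reformulated equation from Theorem \ref{theo6.2}, namely $B^{-1}\Lambda^{-2}u_{tt}+B^{-1}K^{2}u+g(u)=0$, and exploit the self-adjointness of $B^{-1/2}$, $\Lambda^{-1}$, $K$ to rewrite
\begin{displaymath}
    \langle B^{-1/2}\Lambda^{-1}u,B^{-1/2}\Lambda^{-1}u_{tt}\rangle
    =\langle u,B^{-1}\Lambda^{-2}u_{tt}\rangle
    =-\Vert B^{-1/2}Ku\Vert^{2}-\int u\,g(u)\,dx.
\end{displaymath}
Applying the structural hypothesis (\ref{dd-blow-con}) then gives, in direct analogy with (\ref{second-inequality}),
\begin{displaymath}
    H^{\prime\prime}(t)\ge 2\Vert B^{-1/2}\Lambda^{-1}u_{t}\Vert^{2}-2\Vert B^{-1/2}Ku\Vert^{2}-4(1+2\nu)\int_{\mathbb{R}}G(u)\,dx+2b_{0}.
\end{displaymath}

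Combining the upper bound on $[H^{\prime}]^{2}$ with the lower bound on $H^{\prime\prime}$ and using the energy identity (\ref{dd-energy}) produces
\begin{displaymath}
    H(t)H^{\prime\prime}(t)-(1+\nu)[H^{\prime}(t)]^{2}\ge -2(1+2\nu)H(t)\bigl(E(0)+b_{0}\bigr),
\end{displaymath}
so that, choosing $0<b_{0}\le -E(0)$ (possible since $E(0)<0$), the right-hand side is nonnegative. Finally, taking $t_{0}$ large enough ensures $H^{\prime}(0)=2\langle B^{-1/2}\Lambda^{-1}\varphi,B^{-1/2}\Lambda^{-1}\psi\rangle+2b_{0}t_{0}>0$, while $H(0)>0$ is automatic. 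Lemma \ref{lem5.1} then forces $H(t)\to\infty$ in finite time, and hence the solution blows up. The only potentially subtle point is checking that all formal manipulations with $B^{-1/2}\Lambda^{-1}$ are justified on the solution --- this is routine via Fourier multipliers given the positivity/ellipticity bounds (\ref{con-L}) and (\ref{con-B}) and the hypotheses on $\varphi,\psi$ --- after which the argument is just a bookkeeping variant of Theorem \ref{theo5.2}.
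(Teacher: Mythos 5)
Your proposal is correct and follows essentially the same route as the paper: the authors likewise run the concavity argument of Theorem \ref{theo5.2} with $\Lambda^{-1}$ and $K$ replaced by $B^{-1/2}\Lambda^{-1}$ and $B^{-1/2}K$, using the reformulated equation $B^{-1}\Lambda^{-2}u_{tt}+B^{-1}K^{2}u+g(u)=0$ and the energy identity (\ref{dd-energy}) before invoking Lemma \ref{lem5.1}. No substantive differences to report.
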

\begin{proof}
    The proof is very similar to that of Theorem \ref{theo5.2}. The main differences are that we now use the function
    \begin{displaymath}
    H(t)=  \left\Vert B^{-1/2}\Lambda^{-1}u(t) \right\Vert^{2}+b_{0}(t+t_{0})^{2}
    \end{displaymath}
    and that the energy $E(t)$ has a different form for the double dispersive equations. Detailed calculations are not
    presented here, but we can easily derive them  replacing $\Lambda^{-1}$  and $K$ in the proof of Theorem \ref{theo5.2} by
    $B^{-1/2}\Lambda^{-1}$  and $B^{-1/2}K$, respectively. Similar computations as in the proof of Theorem \ref{theo5.2} establish
    \begin{displaymath}
       [H^{\prime}(t)]^{2}
                \le  4 H(t) \left[ \left\Vert B^{-1/2}\Lambda^{-1}u_{t}\right\Vert^{2}+b_{0} \right],
    \end{displaymath}
    and
    \begin{displaymath}
       H^{\prime\prime}(t)  =  2 \left\Vert B^{-1/2}\Lambda^{-1}u_{t}\right\Vert^{2}
                                -2 \left\Vert B^{-1/2}Ku\right\Vert^{2} -2 \int_{\mathbb{R}}u g(u)dx+2b_{0}.
    \end{displaymath}
    Using (\ref{dd-blow-con}) in this equation we deduce that
    \begin{displaymath}
        H^{\prime\prime}(t) \ge 2 \left\Vert B^{-1/2}\Lambda^{-1}u_{t}\right\Vert^{2}-2 \left\Vert B^{-1/2}Ku\right\Vert^{2}
            -4(1+2\nu)\int_{\mathbb{R}}G(u)dx+2b_{0}.
    \end{displaymath}
    and that
    \begin{displaymath}
       H(t)H^{\prime\prime}(t)-(1+\nu)[H^{\prime}(t)]^{2} \ge  -2(1+2\nu)H(t) (E(0)+b_{0})
    \end{displaymath}
    With an argument similar to that of Theorem \ref{theo5.2} we conclude that $H(t)$ and the energy blow up in finite time.
\end{proof}

\vspace*{10pt}

\noindent
{\bf Acknowledgement}: This work has been supported by the Scientific and Technological Research Council of Turkey
(TUBITAK) under the project TBAG-110R002.



\begin{thebibliography}{99}

 \bibitem{samsonov1} A. M. Samsonov, E. V. Sokurinskaya,
            On the excitation of a longitudinal deformation soliton in a nonlinear elastic solid,
            Sov. Phys. Tech. Phys. 33  (1988) 989-991.

 \bibitem{samsonov2} A. M. Samsonov,
            Nonlinear Strain Waves in Elastic Waveguides, in: A. Jeffrey, J. Engelbrecht (Eds.), Nonlinear Waves in Solids,
            CISM Courses and Lecture, vol. 341, Springer, Wien 1994.

\bibitem{boussinesq1} J. V. Boussinesq,
            Théorie des ondes et des remous qui se propagent le long d'un canal rectangulaire horizontal, en communiquant au
            liquide contenu dans ce canal des vitesses sensiblement pareilles de la surface au fond,
            Journal de Mathématiques Pures et Appliquées. Deuxième Série 17 (1872) 55-108.

\bibitem{boussinesq2} J.  V. Boussinesq,
            Essai sur la theorie des eaux courants, Academia des Sciences Inst. France, Series 2 (1877) 1-680.

\bibitem{duruk1} N. Duruk, H. A. Erbay, A. Erkip,
        Global existence and blow-up for a class of nonlocal nonlinear Cauchy problems arising in elasticity,
        Nonlinearity  23 (2010) 107-118.

\bibitem{duruk2} N. Duruk, H. A. Erbay, A. Erkip,
        Blow-up and global existence for a general class of nonlocal nonlinear coupled wave equations,
        Journal of Differential Equations 250 (2011) 1448-1459.

\bibitem{erbay} H. A. Erbay, S. Erbay, A. Erkip,
        The Cauchy problem for a class of two-dimensional nonlocal nonlinear wave equations governing anti-plane shear motions
        in  elastic materials, Nonlinearity 24 (2011) 1347-1359.

\bibitem{bona}  J. L. Bona, R. L. Sachs,
            Global existence of smooth solutions and stability of solitary waves for a generalized Boussinesq equation,
            Communications in Mathematical Physics 118 (1988) 15-29.

\bibitem{tsutsumi} M. Tsutsumi, T. Matahshi,
            On the Cauchy problem for the Boussinesq-type equation, Mathematica Japonica 36 (1991) 371-379.

\bibitem{liu1} Y. Liu,
            Instability and blow-up of solutions to a generalized Boussinesq equation,
            SIAM Journal on Mathematical Analysis 26 (1995) 1527–1546.

\bibitem{xue} R. Xue,
            Local and global existence of solutions for the Cauchy problem of a generalized Boussinesq equation,
            Journal of Mathematical Analysis and Applications 316 (2006) 307–327.

\bibitem{turitsyn} S. K. Turitsyn,
            Blow-up in the Boussinesq equation, Physical Review E 47 (1993) 796-799.

\bibitem{liu2} Y. Liu,
            Existence and blow up of solutions of a nonlinear Pochhammer-Chree equation,
            Indiana University Mathematics Journal  45 (1996) 797–816.

\bibitem{chen-wang} G. Chen, S. Wang,
            Existence and nonexistence of global soutions for the generalized IMBq equation,
            Nonlinear Analysis-Theory Methods and Applications 36 (1999) 961-980.

\bibitem{constantin} A. Constantin, L. Molinet,
            The initial value problem for a generalized Boussinesq equation,
            Differential and Integral Equations 15 (2002) 1061-1072.

\bibitem{wang-mu} Y. Wang, C. Mu,
            Blow-up and scattering of solution for a generalized Boussinesq equation,
            Applied Mathematics and Computation 188 (2007) 1131-1141.

\bibitem{duruk3}  N. Duruk, A. Erkip, H. A. Erbay,
            A higher-order Boussinesq equation in locally nonlinear theory of one-dimensional nonlocal elasticity,
            IMA Journal of Applied Mathematics 74 (2009) 97-106.

\bibitem{wang-chen} S. Wang, G. Chen,
            Cauchy problem of the generalized double dispersion equation,
            Nonlinear Analysis-Theory Methods and Applications 64 (2006) 159-173.

\bibitem{runst} T. Runst, W. Sickel,
            Sobolev Spaces of Fractional Order, Nemytskij Operators, and Nonlinear Partial Differential Equations,
            Walter de Gruyter, Berlin, 1996.

\bibitem{kalantarov} V. K. Kalantarov, O. A. Ladyzhenskaya,
        The occurrence of collapse for quasilinear equation of parabolic and hyperbolic types, J. Sov. Math. 10 (1978) 53-70.

\bibitem{levine}  H. A. Levine,
        Instability and nonexistence of global solutions to nonlinear wave equations of the form $Pu_{tt}=-Au+f(u)$,
        Transactions of American Mathematical Society 192 (1974) 1-21.

 \end{thebibliography}
 \end{document}